\newcommand \lh {\text{lh}}
\newcommand {\new} {\newcommand}
\newcommand {\renew} {\renewcommand}
\newcommand \opera[2] {\renew #1 {\operatorname{#2}}}
\newcommand \oper[2] {\new #1 {\operatorname{#2}}}
\newcommand \gcode[1] {\ulcorner\! #1 \!\urcorner}	
\newcommand \se [1] { \{ #1 \}}			
\newcommand\set [2]{ \{#1:#2\} }			
\newcommand\res {\!\upharpoonright\!}		
\newcommand\elem {\prec}				
\newcommand\coll{{Coll}}
\newcommand{\card}{\operatorname{card}}
\newcommand\concat {{^\frown}}   
\oper{\Ord}{Ord}				
\oper{\ZFC}{ZFC}				
\oper{\rank}{rank}				
\oper{\crit}{cr}					
\oper{\crt}{crt}					
\oper{\cf}{cf}					
\oper{\height}{ht}				
\oper{\wfcore}{wfcore}					
\oper{\core}{core}					
\oper{\Ult}{Ult}				
\oper{\Cone}{Cone}				
\oper{\dirlim}{dirlim}
\oper{\rud}{rud}		
\oper{\const}{const}
\oper{\OD}{OD}
\oper{\final}{final}
\oper{\HYP}{HYP}
\oper{\wfp}{wfp}
\oper{\Hom}{Hom}
\new{\ult} {\Ult}
\oper{\dom}{dom}
\oper{\rep}{rep}
\oper{\suc}{succ}
\oper{\fac}{fac}
\oper{\Code}{Code}
\oper{\ran}{ran}
\oper{\maxdom}{maxdom}
\oper{\maxran}{maxran}
\oper{\lp}{Lp} 
\oper{\pro}{pro} 
\oper{\lift}{lift} 
\opera{\drop}{drop} 
\oper{\base}{base} 
\newcommand\iniseg {\vartriangleleft}		
\newcommand\lengthen {\triangleright}		
\newcommand\inisegeq {\trianglelefteq}	
\newcommand\isom {\cong}			
\newtheorem{theorem}{Theorem}[section]
\newtheorem{lemma}[theorem]{Lemma}
\newtheorem{corollary}[theorem]{Corollary}
\theoremstyle{definition}
\newtheorem{definition}[theorem]{Definition}
\theoremstyle{definition} \newtheorem*{defplain*}{\gy{Definition}}
\theoremstyle{definition} \newtheorem*{thmplain*}{\gy{Theorem}}
\theoremstyle{definition} \newtheorem*{queplain*}{\gy{Question}}
\theoremstyle{definition} 
\theoremstyle{definition} \newtheorem*{corplain*}{\gy{Corollary}}
\theoremstyle{remark} 		
\theoremstyle{remark}
\newcommand{\boldpi}[1]{{\boldsymbol{\Pi}^1_{#1}}}
\newcommand{\bolddelta}[1]{{\boldsymbol{{\delta}}^1_{#1}}}
\newcommand{\boldDelta}[1]{{\boldsymbol{{\Delta}}^1_{#1}}}
\newcommand{\DEF}{=_{{\textrm{DEF}}}}
\newcommand{\wocode}[1]{  \|   #1  \|}
\newcommand{\comm}[1]{{}}
\title{Lightface mice with finitely many Woodin cardinals from optimal determinacy hypotheses}
\date{\today{}}
\author{Yizheng Zhu}
\affil{Institut f\"{u}r mathematische Logik und Grundlagenforschung \\
Fachbereich Mathematik und Informatik\\
Universit\"{a}t M\"{u}nster\\
Einsteinstr. 62 \\
48149 M\"{u}nster, Germany 
}
\begin{document}
\maketitle{}
\begin{abstract}
  The determinacy of lightface $\Delta^1_{2n+2}$ and boldface $\boldsymbol{\Pi}^1_{2n+1}$ sets implies the existence of an $(\omega, \omega_1)$-iterable $M_{2n+1}^{\#}$. 
\end{abstract}

\section{Introduction}
\label{sec:introduction}

We prove the following theorem on the equivalence of determinacy principles and the existence of an iterable mouse with an odd number of Woodin cardinals:
\begin{theorem}
  \label{thm:main}
Suppose $n$ is a natural number.   The following are equivalent:
  \begin{enumerate}
  \item $\boldpi{2n+1}$-determinacy $+ \Delta^1_{2n+2}$-determinacy.
  \item $\forall x \in \mathbb{R}$(there is an $(\omega_1,\omega_1)$-iterable $M_{2n}^{\#}(x)$) and there is $N\in HC$ such that $L[N] \models $``there are $2n+1$ Woodin cardinals''.
  \item There is an $(\omega, \omega_1)$-iterable $M_{2n+1}^{\#}$.
  \end{enumerate}
\end{theorem}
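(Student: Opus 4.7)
\medskip

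\noindent\textbf{Proof plan.} I would prove the theorem by closing the cycle $(3)\Rightarrow(1)\Rightarrow(2)\Rightarrow(3)$, together with the essentially trivial observation that $(3)\Rightarrow(2)$ (which is needed only to confirm consistency of the statements). The direction $(3)\Rightarrow(1)$ is the Woodin--Neeman style implication: from an $(\omega,\omega_1)$-iterable $M_{2n+1}^{\#}$ one extracts canonical $2n{+}2$-premice over arbitrary reals, and Neeman's theorem that $n$ Woodin cardinals below a measurable yield the determinacy of $\boldsymbol{\Pi}^1_{n+1}$ and lightface $\Delta^1_{n+2}$ games applies directly. I would simply cite this and give a one-paragraph verification that iterability in $V$ at level $\omega_1$ suffices for the payoff-set game to be determined at the appropriate boldface/lightface pointclass.

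\medskip

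\noindent For $(1)\Rightarrow(2)$ I would first apply the inductive half of the Martin--Steel--Woodin correspondence: $\boldsymbol{\Pi}^1_{2n+1}$-determinacy implies that for every real $x$ the mouse $M_{2n}^{\#}(x)$ exists and is $(\omega_1,\omega_1)$-iterable. (If the author has a previous paper or section handling the even level, I would invoke it; otherwise this is the standard Woodin argument via a $K^c$-construction relativized to $x$, with iterability read off from $\boldsymbol{\Pi}^1_{2n+1}$-absoluteness and the comparison lemma.) To produce the hereditarily countable $N$ with $L[N]\models\text{``there are } 2n{+}1\text{ Woodin cardinals''}$ I would use lightface $\Delta^1_{2n+2}$-determinacy together with a $Q$-theory / Kechris--Woodin style analysis at the odd projective level: the pointclass of $\Delta^1_{2n+2}$ sets, combined with the $M_{2n}^{\#}(x)$'s, is enough to run a lightface $K^c$-construction above a countable base and to see that its output reaches $2n{+}1$ Woodins, because otherwise one would contradict the optimality of determinacy at this level by core model induction.

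\medskip

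\noindent The heart of the paper is $(2)\Rightarrow(3)$, and this is where I expect the real work. Given $N\in HC$ with $L[N]\models\text{``}2n{+}1\text{ Woodin cardinals''}$ and the $M_{2n}^{\#}(x)$'s, the strategy is: run a $K^c$-construction inside $L[N]$ up to the least Woodin, collapse / project to obtain a candidate mouse $M$ with $2n{+}1$ Woodins and a top measure, and then lift $M$ to a genuine $(\omega,\omega_1)$-iterate. The delicate point is \emph{iterability in $V$}, not merely in $L[N]$. Here one uses the $M_{2n}^{\#}(x)$'s to certify branches: every countable iteration tree $\mathcal T$ on $M$ lives below the Woodins, its relevant cofinal-branch problem reduces to a $\boldsymbol{\Pi}^1_{2n+1}$ question, and the existence of $M_{2n}^{\#}$ over any real of $V$, together with $\boldsymbol{\Pi}^1_{2n+1}$-determinacy from (1) (which one bootstraps back from (2) via the earlier step), lets one identify the unique cofinal well-founded branch by a $Q$-structure argument. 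The main obstacle, and the step I would spend the most time on, is precisely this branch-certification: verifying that the $Q$-structure guessed from $M_{2n}^{\#}$ really agrees with the internal guess from $L[N]$, i.e.\ that the construction inside $L[N]$ is not ``too short'' and that its hulls are fully iterable externally. Once this transfer is carried out, the resulting $(\omega,\omega_1)$-iterable premouse is easily seen to be the desired $M_{2n+1}^{\#}$ by the usual minimality and uniqueness considerations.
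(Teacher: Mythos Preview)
Your outline for $(3)\Rightarrow(1)$ and $(1)\Rightarrow(2)$ matches the paper: both are handled by citation (Neeman for the former, \cite{SUW} for the latter), and the paper adds nothing new there.

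The gap is in $(2)\Rightarrow(3)$, and it is exactly the step you flag as ``the main obstacle'' but then resolve incorrectly. Your claim that ``the relevant cofinal-branch problem reduces to a $\boldsymbol{\Pi}^1_{2n+1}$ question'' is precisely what fails for $n>0$. As the paper's introduction explains (for $n=1$): the set of reals coding countable initial segments of $L$ is $\Pi^1_1$, but the set of reals coding countable initial segments of $M_2$ is \emph{not} $\Pi^1_3$. Hence a $Q$-structure argument using $M_{2n}^{\#}$ over the common part of a tree will not certify branches once the tree is maximal at (or goes above) the bottom Woodin---the $Q$-structures you need there are not $(2n)$-small, and their existence is not a $\Pi^1_{2n+1}$ fact. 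Your proposed transfer of iterability from $L[N]$ to $V$ via $M_{2n}^{\#}$ therefore breaks down at exactly the point where the interesting trees live. (Also, trees on a mouse with $2n{+}1$ Woodins do not ``live below the Woodins''; they use extenders overlapping them.)

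What the paper does instead is substantially different. It does \emph{not} try to directly iterate a candidate with $2n{+}1$ Woodins. It works with \emph{suitable} premice having a single Woodin $\delta$, and replaces ``indiscernibles of $M_{2n}$ above its Woodins'' by indiscernibles for $L[T_{2n+1}]$, equivalently for iterates of $M_{2n}$ \emph{below} their Woodins, packaged as the higher sharp $x^{3\#}$ and the $\mathbb{L}[T_3]$-measures $\mu_m$ from \cite{sharpI}. The core is a higher-level \emph{bad sequence argument} (Lemma~\ref{lem:bad_sequence}): one reflects a putative failure of $m$-iterability into a suitable $\mathcal{P}$ (via $\Sigma^1_4$-absoluteness into $M_2(\widehat{\mathcal{P}})^{\coll(\omega,\widehat{\mathcal{P}})}$ and condensation, Lemma~\ref{lem:condensation_suitable}), where the internal iteration strategy of $\mathcal{P}$ contradicts badness. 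Pseudo-comparing the resulting $m$-iterable suitables for all $m$ and taking a limit of $(\cdot)^{3\#}_m$-respecting branches yields a genuinely $(\omega,\omega_1)$-iterable suitable $\mathcal{Q}$; $M_2^{\#}(\mathcal{Q})$ is then an iterable mouse with three Woodins and a sharp, from which $M_3^{\#}$ is extracted. None of this machinery---suitability, short/maximal trees, the map $g^{\mathcal{T}}$, the theories $Th^{\mathcal{P}}_{(\alpha,\beta)}$, or the measures $\mu_m$---appears in your plan, and it is not a detail: it is the mechanism that replaces the unavailable $\Pi^1_{2n+1}$ branch certification.
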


Theorem~\ref{thm:main} solves the conjecture in \cite[Section 4.2]{SUW} for odd $n$. 
The new ingredient in this paper is the direction 2 $\Rightarrow$ 3 for $n>0$. The proof of 3 $\Rightarrow$ 1 appears in \cite{nee_opt_I,nee_opt_II}; 1 $\Rightarrow$ 2 appears in \cite{SUW}; 2 $\Rightarrow$ 3 for $n=0$ appears in \cite{hod_as_a_core_model}. 

In the proof of 2 $\Rightarrow$ 3 for $n=0$ in \cite{hod_as_a_core_model}, the key idea of producing an iterable $M_1^{\#}$ is the ``bad sequence argument'': If $(\mathcal{T}_i: i < \omega)$ is a stack of iteration trees on $\mathcal{N}$ according to an iteration strategy, $\mathcal{N}_i$ is the last model of $\mathcal{T}_i$ and $\alpha \in \mathcal{N}_i$ for any $i$, 
then for all but finitely $i$, $\pi^{\mathcal{T}_i}$ exists and $\pi^{\mathcal{T}_i}(\alpha) = \alpha$. In practice, we take $\alpha$ to be the G\"{o}del code of $(u_1,\dots,u_m)$ for any finite $m$ in order to get proper class models whose iteration strategies respect $(u_1,\dots,u_m)$, and finally by varying $m$, the pseudo-comparison of these proper class models leads to an iterable mouse with a sharp on top of a Woodin cardinal. 

This paper generalizes the ``bad sequence argument'' to the higher levels in the projective hierarchy. The main obstacle was the following: Say $n=1$. The set of reals coding countable initial segments of ${L}$ has complexity $\Pi^1_1$. However, the set of reals coding countable initial segments of $M_2$ is not $\Pi^1_3$! Due to this problem in complexity, the usual indiscernability arguments does not work any more with indiscernibles of $M_2$ \emph{above} the Woodin cardinals of $M_2$. 
Here is the correct intuition: The correct higher level analog of $L$ is not $M_2$, but rather $L[T_3]$, where $T_3$ is the Moschovakis tree on $\omega \times \bolddelta{3}$ arising from the $\Pi^1_3$-scale on the universal $\Pi^1_3$ set. ``countable initial segments of $L$'' should correspond to $\Pi^1_3$-iterable mice, as defined in \cite{steel_projective_wo_1995}. $\Pi^1_3$-iterable mice is precisely the collection of mice that are strictly smaller that $M_2|\delta^{M_2}$ in the Dodd-Jensen prewellordering of mice, where $\delta^{M_2}$ is the smallest Woodin of $M_2$. Instead of working with indiscernibles of $M_2$ \emph{above} its Woodins,  one needs to work with indiscernibles for $L[T_3]$, or essentially, indiscernibles for iterates of $M_2$ \emph{below} their Woodins. The indiscernibles for $L[T_3]$ and its relationship with $M_2^{\#}$ is worked out in \cite{sharpI}. 

We briefly recall the background knowledge. Assume $\boldpi{2}$-determinacy. Moschovakis \cite{mos_dst} shows that $\Pi^1_3$ has the scale property. $T_3$ is the tree of the $\Pi^1_3$-scale on the universal $\Pi^1_3$ set. Steel \cite{steel_projective_wo_1995} defines the notion of $\Pi^1_3$-iterable mouse. In this paper, $\Pi^1_3$-iterable mice are by default countable and 2-small. For any real $x$, the set of reals coding $\Pi^1_3$-iterable $x$-mice is $\Pi^1_3(x)$, uniformly in $x$. $\Pi^1_3$-iterable $x$-mice are genuinely $(\omega_1,\omega_1)$-iterable. If $\mathcal{M}$ and $\mathcal{N}$ are both $\Pi^1_3$-iterable $x$-mouse, $\mathcal{M} \leq_{DJ(x)} \mathcal{N}$ means that in the comparison between $\mathcal{M}$ and $\mathcal{N}$, the main branch on the $\mathcal{M}$-side does not drop. $\leq_{DJ(x)}$ is a prewellordering on the set of $\Pi^1_3$-iterable $x$-mouse.
We denote by
\begin{displaymath}
  \wocode{\mathcal{M}}_{DJ(x)}
\end{displaymath}
the $\leq_{DJ(x)}$-rank of $\mathcal{M}$. The length of $\leq _{DJ(x)}$ is at most $\bolddelta{3}$. 
If $\mathcal{M}$ is a $\Pi^1_3$-iterable $x$-mouse, the following sets are $\Delta^1_3(x)$, uniformly in $x$:
\begin{align*}
  \{ z : &~ z \text{ codes a $\Pi^1_3$-iterable $x$-mouse } \mathcal{N}_z  \wedge \wocode{\mathcal{N}_z}_{DJ(x)} < \wocode{\mathcal{M}}_{DJ(x)} \},\\
  \{ z : &~ z \text{ codes a $\Pi^1_3$-iterable $x$-mouse } \mathcal{N}_z  \wedge \wocode{\mathcal{N}_z}_{DJ(x)} = \wocode{\mathcal{M}}_{DJ(x)} \}.
\end{align*}
If $\mathcal{M}$ is a $\Pi^1_3$-iterable $x$-mouse,
\begin{displaymath}
\mathcal{M}_{\infty}^x
\end{displaymath}
denotes the direct limit of all countable non-dropping iterates of $\mathcal{M}$ and
\begin{displaymath}
\pi_{\mathcal{M}, \infty}^x: \mathcal{M} \to \mathcal{M}_{\infty}^x
\end{displaymath}
denotes the direct limit map. $\mathcal{M}_{\infty}^x$ depends only on $x$ and $\wocode{\mathcal{M}}_{DJ(x)}$, so for $\alpha = \wocode{\mathcal{M}}_{DJ(x)}$, we denote
\begin{displaymath}
  \mathcal{N}_{\alpha,\infty}^x = \mathcal{M}_{\infty}^x.
\end{displaymath}
If $A$ is a countable self-wellordered set, we can make sense of $\Pi^1_3$-iterable $A$-mice and $\leq_{DJ(A)}$, $\wocode{\cdot}_{DJ(A)}$, $\mathcal{M}^A_{\infty}$, $\pi^A_{\mathcal{M},\infty}$, $\mathcal{N}^A_{\alpha, \infty}$. As a consequence of Silver's dichotomy on $\Delta^1_3$-equivalence relations (cf.\ \cite{hjorth_coarse}, \cite[Corollary 2.14]{sharpI}) and $Q$-theory (cf.\ \cite{Q_theory,becker_kechris_1984,Kechris_Martin_II}), we are able to compare the Dodd-Jensen rank of $\Pi^1_3$-mice over different reals in a $\Sigma^1_4$ way that is absolute between transitive models closed under the $M_1^{\#}$-operator:
\begin{theorem}[{\cite[Corollary 2.15]{sharpI}}]
  \label{thm:compare_mouse_order}
  Assume $\boldDelta{2}$-determinacy. Then the relations
  \begin{align*}
&    z \text{ codes a $\Pi^1_3$-iterable $x$-mouse } \mathcal{P}_z \wedge  z' \text{ codes a $\Pi^1_3$-iterable $x'$-mouse } \mathcal{P}_{z'} \\
&\wedge \wocode{\mathcal{P}_z}_{DJ(x)} =  \wocode{\mathcal{P}_{z'}}_{DJ(x')}
  \end{align*}
and
  \begin{align*}
&    z \text{ codes a $\Pi^1_3$-iterable $x$-mouse } \mathcal{P}_z \wedge  z' \text{ codes a $\Pi^1_3$-iterable $x'$-mouse } \mathcal{P}_{z'} \\
&\wedge \wocode{\mathcal{P}_z}_{DJ(x)} =  \wocode{\mathcal{P}_{z'}}_{DJ(x')} \wedge \\
& m\in \omega \text{ codes $\alpha \in \mathcal{P}_z$ relative to }z \wedge m'\in \omega \text{ codes $\alpha' \in \mathcal{P}_{z'}$ relative to }z'\\
&  \wedge \pi^x_{\mathcal{P}_z,\infty} (\alpha) = \pi^{x'}_{\mathcal{P}_{z'},\infty} (\alpha')
  \end{align*}
are both $\Sigma^1_4$ and absolute between transitive models which contain $\se{z,x,z',x'}$ and are closed under the $M_1^{\#}$-operator. 
\end{theorem}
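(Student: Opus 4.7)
The plan is to reduce the cross-real assertion about Dodd--Jensen ranks to a same-real computation via a lifting step, and then combine known pointclass estimates. Given inputs $x, x', z, z'$ (and $\alpha, \alpha'$ for the second relation), I would first set $y = x \oplus x'$ and associate to $\mathcal{P}_z$ a canonical $\Pi^1_3$-iterable $y$-mouse $\mathcal{P}_z^{\uparrow y}$ (and similarly $\mathcal{P}_{z'}^{\uparrow y}$) obtained by rebuilding the mouse on top of $y$ rather than on its original base. The key technical claim is that this lifting preserves $\Pi^1_3$-iterability and preserves the $\leq_{DJ}$-rank, in the sense that $\|\mathcal{P}_z^{\uparrow y}\|_{DJ(y)}$ depends only on $\|\mathcal{P}_z\|_{DJ(x)}$ and $y$; hence equality of ranks over the different bases $x, x'$ translates into equality of ranks over the common base $y$.

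Once both mice sit over $y$, the equality $\|\mathcal{P}_z^{\uparrow y}\|_{DJ(y)} = \|\mathcal{P}_{z'}^{\uparrow y}\|_{DJ(y)}$ is $\Delta^1_3(y)$ by the uniform definability of the $\leq_{DJ}$-prewellorder recalled just before the theorem, and ``$z$ codes a $\Pi^1_3$-iterable $x$-mouse'' contributes a $\Pi^1_3$ predicate; so the first relation is expressible as $\exists y = x \oplus x'$ followed by a $\Delta^1_3$ assertion about the lifts, which is $\Sigma^1_4$. For the second relation I would express $\pi^x_{\mathcal{P}_z,\infty}(\alpha) = \pi^{x'}_{\mathcal{P}_{z'},\infty}(\alpha')$ by asserting the existence of a further $\Pi^1_3$-iterable $y$-mouse $\mathcal{R}$ which is a simultaneous non-dropping iterate of $\mathcal{P}_z^{\uparrow y}$ and $\mathcal{P}_{z'}^{\uparrow y}$ (a pseudo-comparison) such that the copied iteration maps send $\alpha$ and $\alpha'$ to the same ordinal in $\mathcal{R}$; equivalently, the images agree in $\mathcal{N}^y_{\beta,\infty}$ where $\beta$ is the common DJ-rank. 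This is an existential over a real witness whose verification is $\Delta^1_3$, keeping us inside $\Sigma^1_4$.

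For absoluteness between transitive models closed under the $M_1^{\#}$-operator, the decisive input is the characterization of $\Pi^1_3$-iterable mice as precisely those strictly below $M_2 | \delta^{M_2}$ in the DJ-prewellorder. Any transitive model $M$ closed under the $M_1^{\#}$-operator and containing $\{z,x,z',x'\}$ can run the $Q$-structure construction internally: $M_1^{\#}$ of a countable 2-small premouse suffices to certify and extend its $\Pi^1_3$-iteration strategy, so $M$ computes $\leq_{DJ}$ correctly on its countable mice. Together with Shoenfield-style absoluteness for $\Sigma^1_4$ statements whose $\Sigma^1_3$-instances are computed by an $M_1^{\#}$-closed structure, this yields absoluteness of both relations.

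The step I expect to be the main obstacle is the lifting claim, namely that passing from an $x$-mouse to its rebuild over $y \geq_T x$ neither collapses nor separates DJ-ranks. The cleanest proof goes through $Q$-theory together with Silver's dichotomy on $\Delta^1_3$-equivalence relations (the inputs cited just before the theorem): one shows that the $\Delta^1_3$-equivalence ``$\mathcal{P}$ and $\mathcal{P}^{\uparrow y}$ have the same DJ-rank over their respective bases'' has only boundedly many classes below $\boldsymbol{\delta}^1_3$, and then identifies the rank across bases by matching the canonical direct limits $\mathcal{N}^x_{\alpha,\infty}$ and $\mathcal{N}^y_{\alpha,\infty}$. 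The remaining bookkeeping --- verifying that the existential witness for the direct-limit-image statement can always be found inside any $M_1^{\#}$-closed model --- is then routine.
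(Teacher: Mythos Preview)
The paper does not prove this theorem; it is quoted from \cite[Corollary~2.15]{sharpI}, with only the one-line indication that it follows from Silver's dichotomy on $\Delta^1_3$ equivalence relations and $Q$-theory. So there is no in-paper proof to compare against.

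Your proposal names the same two ingredients, but the central step --- the lifting $\mathcal{P}_z \mapsto \mathcal{P}_z^{\uparrow y}$ --- is not a well-defined operation. An $x$-premouse is a structure $(J_\alpha^{\vec E}[x],\in,\vec E,x)$; there is no canonical way to ``rebuild it on top of $y = x \oplus x'$'', since replacing $x$ by $y$ changes every level of the $J$-hierarchy and the extender sequence does not survive the substitution. Any concrete $y$-mouse you might associate to $\mathcal{P}_z$ will have a DJ-rank over $y$ that depends on more than $\wocode{\mathcal{P}_z}_{DJ(x)}$ alone. The rank-preservation under lifting that you posit is not a bookkeeping detail --- it is essentially the content of the theorem, and your outline assumes it rather than deriving it from Silver's dichotomy and $Q$-theory.

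Your account of the Silver step is also off: the dichotomy (in the form relevant at level~3) says a thin $\Delta^1_3$ equivalence relation has all its classes $\Delta^1_3$; it does not assert there are ``boundedly many classes below $\bolddelta{3}$'', and in any case the equivalence relation you propose to apply it to presupposes the undefined lift. The part of your outline that is sound is the absoluteness clause: closure under the $M_1^{\#}$-operator is exactly what is needed to make $\Pi^1_3$-iterability and the $\Delta^1_3$-in-the-base initial-segment relations absolute, so once a correct $\Sigma^1_4$ definition is in hand, its absoluteness between such models follows as you describe.
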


Assume $\forall x \in \mathbb{R} $(there is an $(\omega_1,\omega_1)$-iterable $M_2^{\#}$). Steel \cite{steel_hodlr_1995} shows that:
\begin{enumerate}
\item For any real $x$, $\leq_{DJ(x)}$ has length $\bolddelta{3}$.
\item Let $M_{2,\infty}^{\#}(x)$ be the direct limit of all countable non-dropping iterates of $M_2^{\#}(x)$. Then $\bolddelta{3}$ is the least strong up to the least Woodin in $M_{2,\infty}^{\#}(x)$ and $M_{2,\infty}^{\#}(x) = L_{\bolddelta{3}}[T_3,x]$. 
\end{enumerate}
We say that a $\Pi^1_3$-iterable $x$-mouse $\mathcal{P}$ is \emph{full} iff for any $\Pi^1_3$-iterable $\mathcal{P}$-mouse $\mathcal{R}$, $\mathcal{R}$ can be regarded as an $x$-mouse, i.e., for any $\rho < o(\mathcal{P})$, for any $A \subseteq \rho$, $A \in \mathcal{P}$ iff $A \in \mathcal{R}$. Equivalently, $\mathcal{P}$ is full iff $M_2(\mathcal{P})$ does not contain bounded subsets of $o(\mathcal{P})$ that are not in $\mathcal{P}$. If $\mathcal{P}$ is full, then $\mathcal{P}_{\infty}^x = M_{2,\infty}^{\#}(x) | \gamma$ where $\gamma = o(\mathcal{P}_{\infty}^x)$ is a cardinal and cutpoint in $M_{2,\infty}^{\#}(x)$. 



Put $\mathbb{L}[T_3]= \bigcup_{x \in \mathbb{R}} \mathbb{L}[T_3]$. The following theorem shows the equivalence of $L[T_3,x]$-indiscernibles and $M_2^{\#}(x)$:

  \begin{theorem}[Zhu \cite{sharpI}]
    \label{thm:higher_sharp}
    There are countably complete $\mathbb{L}[T_3]$-measures $(\mu_n: n < \omega)$ on $(\bolddelta{3})^2$ such that for any $x \in \mathbb{R}$,
    \begin{enumerate}
    \item for $\mu_n$-a.e.\ $(\alpha,\beta)$, if $\wocode{\mathcal{R}}_{DJ(x)} = \beta$, then $\mathcal{R}$ is full and $\mathcal{R}_{\infty} = \mathcal{N}_{\beta,\infty}^x =  M_{2,\infty}^{\#}(x) | \beta$;
    \item letting
      \begin{displaymath}
        (x^{3\#})_n = \set{\gcode{\varphi}}{ \text{for $\mu_n$-a.e.\ } (\alpha,\beta), \mathcal{N}_{\beta,\infty}^x \models \varphi(\alpha)}
      \end{displaymath}
and
\begin{displaymath}
  x^{3\#} = \oplus_{n<\omega} (x^{3\#})_n,
\end{displaymath}
then
\begin{displaymath}
  x^{3\#} \equiv_m M_2^{\#}(x),
\end{displaymath}
uniformly in $x$. 
    \end{enumerate}
  \end{theorem}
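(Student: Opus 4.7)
The plan is to construct the measures $\mu_n$ by combining Martin's measure on Turing cones with the direct-limit structure carried by the Dodd-Jensen prewellordering. I would let $\mu_n$ concentrate on pairs $(\alpha,\beta)$ such that $\beta$ is the $\leq_{DJ(x)}$-rank of a full $\Pi^1_3$-iterable $x$-mouse $\mathcal{R}$ and $\alpha = \pi^x_{\mathcal{R},\infty}(\bar\alpha)$, where $\bar\alpha$ is a canonical choice of the $n$th generator of $\mathcal{R}$ (for instance, the $n$th indiscernible of $\mathcal{R}$ below its lowest Woodin, produced via the iteration strategy of $\mathcal{R}$). Formally, a set $A \subseteq (\bolddelta{3})^2$ is $\mu_n$-large iff for a Turing cone of reals $z$ coding a full $\Pi^1_3$-iterable $x$-mouse $\mathcal{R}_z$, the pair $(\pi^x_{\mathcal{R}_z,\infty}(\bar\alpha^n_{\mathcal{R}_z}),\, \wocode{\mathcal{R}_z}_{DJ(x)})$ lies in $A$.

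First, I would verify well-definedness and countable completeness of the $\mu_n$. Independence from the representative mouse $\mathcal{R}_z$ follows from the commutativity of the direct-limit maps $\pi^x_{\mathcal{R},\infty}$ on non-dropping iterates. Countable completeness is the standard Martin argument: a countable intersection of cones is a cone. Measurability of the relevant predicates uses Theorem~\ref{thm:compare_mouse_order} and \boldDelta{2}-determinacy so that the statements ``$\mathcal{R}$ is full with DJ-rank $\beta$ and $\varphi(\pi^x_{\mathcal{R},\infty}(\bar\alpha))$ holds'' are uniformly $\Sigma^1_4$ and hence absolute enough to be determined.

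Next I would prove item~(1). By Steel's result recalled above, $M_{2,\infty}^{\#}(x) = L_{\bolddelta{3}}[T_3,x]$ with $\bolddelta{3}$ the least strong up to the least Woodin. Non-fullness of a $\Pi^1_3$-iterable $x$-mouse $\mathcal{R}$ corresponds to $M_2(\mathcal{R})$ adding a new bounded subset of $o(\mathcal{R})$; by a coiteration argument against $M_{2,\infty}^{\#}(x)|\beta$, the set of $\beta$ witnessing non-fullness is thin in the cone/club sense that defines $\mu_n$, hence $\mu_n$-null. For full $\mathcal{R}$, the direct limit $\mathcal{R}_\infty^x$ is an initial segment of $M_{2,\infty}^{\#}(x)$ whose height $\gamma$ is a cardinal cutpoint, giving $\mathcal{R}_\infty^x = \mathcal{N}^x_{\beta,\infty} = M_{2,\infty}^{\#}(x)|\beta$.

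The main obstacle will be item~(2), the many-one equivalence $x^{3\#} \equiv_m M_2^{\#}(x)$. The direction $x^{3\#} \leq_m M_2^{\#}(x)$ proceeds by showing that $M_2^{\#}(x)$ internally computes its own direct-limit system together with the tree $T_3$, so the predicate ``for $\mu_n$-a.e.\ $(\alpha,\beta)$, $\mathcal{N}^x_{\beta,\infty} \models \varphi(\alpha)$'' is uniformly $\Sigma_1$ over $M_2^{\#}(x)$. The hard direction, $M_2^{\#}(x) \leq_m x^{3\#}$, requires reconstructing $M_2^{\#}(x)$ from the theory of $\mathbb{L}[T_3,x]$ at the $\mu_n$-generic indiscernibles. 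The plan is to mimic the classical derivation of $L$ from $0^{\#}$: run a Skolem hull / Ehrenfeucht--Mostowski construction on $\mathbb{L}[T_3,x]$ with the indiscernibles supplied by the $\mu_n$, and then read off the extender sequence of $M_2^{\#}(x)$ from the patterns that witness non-triviality of the measures. The delicate point is that, as the introduction emphasizes, one must work with indiscernibles of $L[T_3]$, i.e.\ with indiscernibles of iterates of $M_2$ \emph{below} their Woodins; pushing this through requires careful bookkeeping between the DJ-rank coordinate $\beta$ and the generator coordinate $\alpha$ so that the full extender sequence of $M_2^{\#}(x)$ is decoded uniformly in $x$ via Theorem~\ref{thm:compare_mouse_order}.
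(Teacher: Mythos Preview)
The paper does not contain a proof of this theorem: it is quoted from \cite{sharpI} and used as a black box. The only information the present paper gives about the actual construction is the remark that ``$\mu_n$ can be taken as the $\mathbb{L}[T_3]$-measure arising from the level-3 tree $Y_n$ so that $\llbracket\emptyset\rrbracket_{Y_n}=u_n+\omega$'', which indicates that in \cite{sharpI} the measures come from the level-3 descriptive-set-theoretic machinery (higher analogs of the Martin--Kunen trees), not from a direct Turing-cone construction as you propose. So there is no proof here to compare your outline against.

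That said, your outline has two concrete gaps worth flagging. First, your definition of $\mu_n$ fixes a real $x$ and then ranges over a cone of $z$ coding $x$-mice; but the theorem asserts a \emph{single} sequence of $\mathbb{L}[T_3]$-measures that works simultaneously for every $x$. Compare the paper's description of the level-1 analog $\nu_n$: $A\in\nu_n$ iff \emph{for some} $x$, $A$ contains all increasing $n$-tuples of $x$-indiscernibles. You need the $x$-independence built into the definition, and it is not automatic from what you wrote. Second, a $\Pi^1_3$-iterable $x$-mouse is by definition 2-small and strictly below $M_2|\delta^{M_2}$ in the Dodd-Jensen order; such a mouse need not have any Woodin cardinal, so ``the $n$th indiscernible of $\mathcal{R}$ below its lowest Woodin'' is not a meaningful specification of $\bar\alpha$. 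The genuine content of \cite{sharpI} is precisely to identify the right two-coordinate indiscernible structure $(\alpha,\beta)$ and to carry out the reconstruction of $M_2^{\#}(x)$ from the resulting theories; your sketch of the hard direction (``mimic $0^{\#}$'') does not yet engage with why two coordinates are needed or how the extender sequence is recovered.
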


Fixing $n$, 
$\mu_n$ is the higher level analog of the $\mathbb{L} \DEF \bigcup_{x \in \mathbb{R}} L[x]$-measure $\nu_n$ on $(\omega_1)^n$, where $A \in \nu_n$ iff for some $x \in \mathbb{R}$, $A$ contains all the increasing $n$-tuples of countable $x$-indiscernibles. For the reader familiar with \cite{sharpI}, $\mu_n$ can be taken as the $\mathbb{L}[T_3]$-measure arising from the level-3 tree $Y_n$ so that $\llbracket \emptyset \rrbracket_{Y_n} = u_n+\omega$. 

\section{The bad sequence argument}
\label{sec:proof}

We prove 2 $\Rightarrow$ 3 in Theorem~\ref{thm:main} for $n=1$. The general case makes no essential difference based on \cite{sharpI}.



\begin{definition}
  \label{def:suitable}
  A premouse $\mathcal{P}$ is suitable iff there is $\delta \in \mathcal{P}$ such that
  \begin{enumerate}
  \item $\mathcal{P} = M_2^{\#}(\mathcal{P}|\delta) | (\delta^{+})^{M_2^{\#}(\mathcal{P}|\delta)}$.
  \item $\mathcal{P}$ satisfies the following.
    \begin{enumerate}
    \item $\delta$ is Woodin.
    \item $\forall \eta < \delta~ \forall a \in \mathcal{P}|\eta$ (the $L[\vec{E}]$-construction above $a$ with critical points above $\eta$ reaches $M_2^{\#}(a)$). 
If $\mathcal{N}$ is a $\card(\mathcal{N})^{+}+1$-iterable mouse (over $\emptyset$), then let
\begin{displaymath}
M_2^{*}(\mathcal{N}) = M_2^{\#}(\mathcal{N}) | \alpha
\end{displaymath}
regarded as a $\emptyset$-mouse, where $\alpha$ is the least such that $\exists \rho < o(\mathcal{N}) \exists A \subseteq \rho (A \in \rud(M_2^{\#}(\mathcal{N})|\alpha) \setminus \mathcal{N})$. The \emph{partial iteration strategy guided by 2-small mice} is the partial strategy $\Sigma$ so that $\Sigma(\mathcal{T}) = b$ iff $\mathcal{Q}(b,\mathcal{T}) =M_2^{*}(\mathcal{M}(\mathcal{T})) \neq M_2^{\#}(\mathcal{M}(\mathcal{T}))$.
    \item $\forall \eta < \delta ~ \mathcal{P}|\delta$ is $(\eta,\eta)$-iterable according to the partial iteration strategy guided by 2-small mice.
    \item $\forall \eta < \delta~ M_2^{\#}(\mathcal{P}|\eta) \models  \eta$ is not Woodin.  
    \end{enumerate}
  \end{enumerate}
\end{definition}
If $\mathcal{P}$ is suitable, $\delta^{\mathcal{P}}$ denotes its Woodin, and $\mathcal{P}^{-} = \mathcal{P} | \delta^{\mathcal{P}}$. If $\mathcal{P}$ is also countable, $\mathcal{P}$ itself can be regarded as a full $\Pi^1_3$-iterable $\mathcal{P}^{-}$-mouse. In fact, a countable premouse $\mathcal{P}$ is suitable iff $\mathcal{P}$ satisfies the first order property in Clause 2 in Definition~\ref{def:suitable} and $\mathcal{P}$ is full. 
If $\widehat{\mathcal{P}}$ is another $\Pi^1_3$-iterable $\mathcal{P}^{-}$-mouse, $\widehat{\mathcal{P}}$ can also be regarded as a $\emptyset$-premouse, and we have $\mathcal{P} \inisegeq \widehat{\mathcal{P}}$ iff $\widehat{\mathcal{P}}$ is full. 

\begin{theorem}[Mitchell-Steel \cite{fsit}]
  \label{thm:MS}
  If $\forall x \in \mathbb{R}$(there is an $(\omega_1,\omega_1)$-iterable $M_2^{\#}(x)$) and $\exists N \in HC~ L[N] \models $``there are three Woodin cardinals'', then there is a countable suitable premouse. 
\end{theorem}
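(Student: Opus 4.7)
The plan is to construct the countable suitable premouse by performing a fully backgrounded Mitchell-Steel $L[\vec{E}]$-construction inside $L[N]$ and then passing to a countable hull. Let $\delta_1 < \delta_2 < \delta_3$ be the three Woodin cardinals of $L[N]$. First I would carry out, in $L[N]$, the Schindler-Steel $K^c$-construction with full background certificates, restricted to the class of 2-small premice. Below $\delta_3$ the Mitchell-Steel convergence theorems produce a fine-structural model $\mathcal{W}$ whose Woodin cardinals are exactly those of $L[N]$ below $\delta_3$; set $\delta := \delta_1$. Relativizing the construction above $\delta$ to $\mathcal{W}|\delta$, the remaining Woodin cardinals $\delta_2,\delta_3$ supply enough background extenders for the construction to converge to $M_2^{\#}(\mathcal{W}|\delta)$, and cutting at $(\delta^{+})^{M_2^{\#}(\mathcal{W}|\delta)}$ yields the first-order form demanded by clause 1 of Definition~\ref{def:suitable}.

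Next I would verify iterability in $V$. Inside $L[N]$ the construction is certifiably iterable by its background extenders, but clause 2(c) asks for $V$-iterability via the partial strategy guided by 2-small mice. Here the hypothesis that $M_2^{\#}(x)$ exists and is $(\omega_1,\omega_1)$-iterable for every real $x$ is decisive: applied to the countable sets $a \in \mathcal{W}|\eta$ for $\eta < \delta$, the operator $a \mapsto M_2^{\#}(a)$ furnishes the $\mathcal{Q}$-structures that choose cofinal branches, exactly in the pattern of Steel's proof that $K^c$ is iterable below a Woodin cardinal when an extra Woodin sits above. Condition 2(b) follows from the same operator, applied level-by-level along the construction. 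Condition 2(d) is then automatic, since any ordinal $\eta < \delta$ at which $M_2^{\#}(\mathcal{W}|\eta)$ believes $\eta$ to be Woodin would produce a Woodin of $L[N]$ strictly below $\delta_1$, contradicting the choice of $\delta_1$.

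To obtain a countable representative, I would take a countable $X \prec (H_\theta)^{L[N]}$ with $\theta$ large and containing $\mathcal{W}|(\delta^{+})^{M_2^{\#}(\mathcal{W}|\delta)}$ along with all relevant parameters, transitively collapse, and let $\mathcal{P}$ be the image. Clauses 1, 2(a), 2(b), and 2(d) transfer by elementarity. The strategy guided by 2-small mice is definable from the $M_2^{\#}$-operator, so its $V$-existence on $\mathcal{P}$ follows from the $(\omega_1,\omega_1)$-iterability of $M_2^{\#}(y)$ applied to reals $y$ coding the countable trees on $\mathcal{P}|\delta^{\mathcal{P}}$; this simultaneously witnesses both 2(c) and the fact that $\mathcal{P}$ itself, regarded as a $\mathcal{P}^{-}$-premouse, is a $\Pi^1_3$-iterable $\mathcal{P}^{-}$-mouse in the sense of the remarks following Definition~\ref{def:suitable}.

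The main obstacle will be \emph{fullness} of $\mathcal{P}$, i.e.\ the equality $\mathcal{P} = M_2^{\#}(\mathcal{P}^{-}) | (\delta^{\mathcal{P}})^{+,M_2^{\#}(\mathcal{P}^{-})}$ as computed in $V$ rather than merely in the $L[N]$-side. Fullness inside $\mathcal{W}$ is automatic, because any bounded subset of $\delta$ added by a 2-small mouse over $\mathcal{W}|\delta$ would be captured by the fully backgrounded construction and hence already lie in $\mathcal{W}$. What must be checked is that this property survives the ambient change from $L[N]$ to $V$, and here the universal $M_2^{\#}$-operator hypothesis is used in its full strength: it gives that $M_2^{\#}(\mathcal{P}^{-})$ is computed identically in $L[N]$ and in $V$, so the absence of extraneous bounded subsets transfers. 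If a direct transfer should fail, the fallback is to replace $\mathcal{P}$ by the least level at which fullness breaks and argue by minimality that this level is precisely $(\delta^{+})^{M_2^{\#}(\mathcal{P}^{-})}$, which is exactly what Definition~\ref{def:suitable} requires.
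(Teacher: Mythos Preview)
The paper does not prove this theorem; it is stated with attribution to Mitchell--Steel \cite{fsit} and no argument is given. Your outline---carry out a fully backgrounded $2$-small $L[\vec{E}]$-construction in $L[N]$, cut at the successor of the bottom Woodin, and collapse to a countable premouse---is the standard construction underlying that citation.

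Your handling of fullness, however, contains a genuine gap. The claim that ``$M_2^{\#}(\mathcal{P}^{-})$ is computed identically in $L[N]$ and in $V$'' is not well-formed: $\mathcal{P}^{-}$ is a transitive collapse taken in $V$ and need not lie in $L[N]$ at all, so there is nothing for $L[N]$ to compute. What actually secures fullness is the \emph{universality} of the fully backgrounded construction together with its iterability in $V$: the background extenders in $L[N]$ lift to an iteration strategy for $\mathcal{W}$ in $V$ (well-founded because $L[N]$ is), and any $\Pi^1_3$-iterable $\mathcal{P}^{-}$-mouse pulls back along the hull map and then out-iterates into the construction, so it cannot produce a new bounded subset of $\delta^{\mathcal{P}}$. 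Your proposed fallback of passing to a minimal failure level does not bypass this step. A smaller issue: your justification of clause~2(d) conflates Woodinness in $M_2^{\#}(\mathcal{W}|\eta)$ with Woodinness in $L[N]$; these are not the same, and the actual reason 2(d) holds is that the backgrounded construction continues past $\eta$ and destroys any would-be local Woodin below $\delta_1$.
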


The following condensation principle is an easy generalization of \cite[Lemma 3.3]{hod_as_a_core_model}. Its proof can be found in e.g.\ \cite[Lemma 3.3]{SUW}. 
  \begin{lemma}
    \label{lem:condensation_suitable}
    If $\mathcal{P}$ is countable and suitable, $\widehat{\mathcal{P}}$ is a $\Pi^1_3$-iterable $\mathcal{P}^{-}$-mouse, $\mathcal{H}$ is the transitive collapse of $Hull_{\omega}^{\widehat{\mathcal{P}}}$, the $\widehat{\mathcal{P}}$-definable points where $\widehat{\mathcal{P}}$ is regarded as a $\emptyset$-premouse, 
then $\mathcal{H}$ (regarded as a $\emptyset$-premouse) is an initial segment of $\mathcal{P}|\omega_1^{\mathcal{P}}$.  
  \end{lemma}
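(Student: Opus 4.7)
The plan is to apply a fine-structural condensation lemma to the uncollapse map $\sigma:\mathcal{H}\to\widehat{\mathcal{P}}$ to obtain $\mathcal{H}\inisegeq\widehat{\mathcal{P}}$, and then transfer this to $\mathcal{P}$ using the fact that $\mathcal{P}$ and $\widehat{\mathcal{P}}$ share the same initial segment below $\delta^{\mathcal{P}}$.

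First, observe that $\mathcal{H}$ is a countable sound $\emptyset$-premouse with $\rho_\omega(\mathcal{H})=\omega$, and $\sigma$ is fully elementary as a map of $\emptyset$-premice. Copying iteration trees along $\sigma$ shows that the $\Pi^1_3$-iterability of $\widehat{\mathcal{P}}$---hence its genuine $(\omega_1,\omega_1)$-iterability---transfers to $\mathcal{H}$. Applying the standard fine-structural condensation lemma (as in \cite[Lemma 3.3]{hod_as_a_core_model}, adapted to the present level as indicated in the introduction) to $\sigma$, one concludes $\mathcal{H}\inisegeq\widehat{\mathcal{P}}$.

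Next I transfer from $\widehat{\mathcal{P}}$ to $\mathcal{P}$. Since $\rho_\omega(\mathcal{H})=\omega$, the internal definable surjection witnessing this sits at some level above $\mathcal{H}$ in $\widehat{\mathcal{P}}$'s hierarchy, so $\mathcal{H}$ is countable inside $\widehat{\mathcal{P}}$ and $o(\mathcal{H})<\omega_1^{\widehat{\mathcal{P}}}$. Both $\widehat{\mathcal{P}}$ and $\mathcal{P}$ are $\mathcal{P}^{-}$-mice, and since $\omega_1^{\mathcal{P}^{-}}<\delta^{\mathcal{P}}$ is preserved by any $\mathcal{P}^{-}$-mouse extension, we have $\omega_1^{\widehat{\mathcal{P}}}=\omega_1^{\mathcal{P}^{-}}=\omega_1^{\mathcal{P}}$. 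Furthermore, because $\omega_1^{\mathcal{P}}<\delta^{\mathcal{P}}$ and $\widehat{\mathcal{P}}|\delta^{\mathcal{P}}=\mathcal{P}^{-}=\mathcal{P}|\delta^{\mathcal{P}}$, we have $\widehat{\mathcal{P}}|\omega_1^{\mathcal{P}}=\mathcal{P}|\omega_1^{\mathcal{P}}$. Combining, $\mathcal{H}\inisegeq\widehat{\mathcal{P}}|\omega_1^{\mathcal{P}}=\mathcal{P}|\omega_1^{\mathcal{P}}$, as required.

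The main obstacle will be verifying the condensation lemma at this level: one must check both that $\mathcal{H}$ genuinely embeds as an initial segment (ruling out the degenerate ultrapower-alternative in condensation), which rests on $\mathcal{H}$ being the hull of $\omega$ and carrying no top-level active extender; and that the iterability hypotheses hold for $\mathcal{H}$ after copying, which uses the stability of $\Pi^1_3$-iterability under hulls of $\Pi^1_3$-iterable mice and the absoluteness properties of $\Pi^1_3$-iterability recalled in the introduction.
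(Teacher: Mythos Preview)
The paper does not give its own proof, deferring instead to \cite[Lemma~3.3]{hod_as_a_core_model} and \cite[Lemma~3.3]{SUW}. Your argument follows the standard route taken in those references: collapse the definable hull, note $\rho_\omega(\mathcal{H})=\omega$, apply fine-structural condensation to get $\mathcal{H}\inisegeq\widehat{\mathcal{P}}$ (the ultrapower alternative being ruled out since it would require an extender with critical point $\omega$), and then transfer to $\mathcal{P}$ via $\widehat{\mathcal{P}}|\delta^{\mathcal{P}}=\mathcal{P}|\delta^{\mathcal{P}}$ together with $o(\mathcal{H})<\omega_1^{\widehat{\mathcal{P}}}=\omega_1^{\mathcal{P}}$.

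Two points deserve slightly more justification than you give. First, the equality $\omega_1^{\widehat{\mathcal{P}}}=\omega_1^{\mathcal{P}}$ is not simply a feature of ``any $\mathcal{P}^-$-mouse extension''; it uses suitability of $\mathcal{P}$, namely that $\delta^{\mathcal{P}}$ remains Woodin (hence a cardinal) in $M_2(\mathcal{P}^-)$, so no $\Pi^1_3$-iterable $\mathcal{P}^-$-mouse can add bounded subsets of $\delta^{\mathcal{P}}$. Second, the iterability input to condensation is iterability of $\widehat{\mathcal{P}}$ as a $\emptyset$-premouse, whereas $\Pi^1_3$-iterability of $\widehat{\mathcal{P}}$ as a $\mathcal{P}^-$-mouse only gives iterability above $\delta^{\mathcal{P}}$; one must combine this with the $M_2^{\#}$-guided strategy for $\mathcal{P}^-$ below, which is available from suitability and the background hypothesis. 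You flag this second issue yourself in the final paragraph, so with these clarifications your proof is correct and essentially the intended one.
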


  \begin{definition}
    \label{def:short_tree}
    Let $\mathcal{T}$ be a normal iteration tree on a suitable $\mathcal{P}$. $\mathcal{T}$ is \emph{short} iff $\forall \alpha \leq \lh(\mathcal{T})$ limit, $M_2(\mathcal{M}(\mathcal{T}\res \alpha)) \models ``\delta(\mathcal{T}\res \alpha)$ is not Woodin''. $\mathcal{T}$ is \emph{maximal} iff $\mathcal{T}$ is not short. 
  \end{definition}

  \begin{definition}
    \label{def:short_tree_iterable}
    Suppose $\mathcal{P}$ is suitable. $\mathcal{P}$ is \emph{short-tree-iterable} iff for any putative short tree $\mathcal{T}$ on $\mathcal{P}$,  for any $\Pi^1_3$-iterable $\mathcal{P}^{-}$-mouse $\widehat{\mathcal{P}}$, letting $\widehat{\mathcal{T}}$ be $\mathcal{T}$ construed as a putative tree on $\widehat{\mathcal{P}}$,
    \begin{enumerate}
    \item if $\widehat{\mathcal{T}}$ has a last model $\mathcal{M}_{\alpha}^{\widehat{\mathcal{T}}}$, then either
      \begin{enumerate}
      \item $[0,\alpha]_T$ drops, $\mathcal{M}_{\alpha}^{{\mathcal{T}}}$  is $\Pi^1_3$-iterable, or
      \item $[0,\alpha]_T$ does not drop, $\mathcal{M}_{\alpha}^{\widehat{\mathcal{T}}}$ is a $\Pi^1_3$-iterable $\pi_{0,\alpha}^{\mathcal{T}}(\mathcal{P}^{-})$-mouse. 
      \end{enumerate}
    \item If $\lh(\mathcal{T})$ is limit, $\mathcal{T}$ is short, then $\mathcal{T}$ has a cofinal branch $b$ such that $\mathcal{Q}(b,\mathcal{T}) = M_2^{*}(\mathcal{M}(\mathcal{T}))$, where $M_2^{*}(\mathcal{M}(\mathcal{T})) = M_2^{\#}(\mathcal{M}(\mathcal{T}))|\alpha$ regarded as a $\emptyset$-mouse, $\alpha$ is the least such that  $\exists \rho < \delta(\mathcal{T}) \exists A \subseteq \rho$ $ (A \in M_2^{\#}(\mathcal{M}(\mathcal{T}))|\alpha+1 \setminus \mathcal{M}(\mathcal{T}))$.  
    \end{enumerate}
  \end{definition}

  \begin{lemma}
    \label{lem:suitable_is_short_tree_iterable}
    If $\mathcal{P}$ is suitable, then $\mathcal{P}$ is short-tree-iterable. 
  \end{lemma}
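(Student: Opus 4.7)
The plan is to reduce short-tree-iterability of $\mathcal{P}$ to the partial iterability of $\mathcal{P}^-$ furnished by clause 2(c) of the definition of suitability, namely iterability by the strategy $\Sigma$ guided by 2-small mice. The central observation is that, since $\mathcal{P} = M_2^{\#}(\mathcal{P}^-) | (\delta^+)^{M_2^{\#}(\mathcal{P}^-)}$ and $\delta^{\mathcal{P}}$ is Woodin in $\mathcal{P}$, every normal iteration tree $\mathcal{T}$ on $\mathcal{P}$ corresponds to a tree $\mathcal{T}^-$ on $\mathcal{P}^-$ using the same extenders (whose indices lie below the image of $\delta$), with the top $M_2^{\#}$-structure lifted functorially; in particular $\mathcal{M}(\mathcal{T}) = \mathcal{M}(\mathcal{T}^-)$. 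The same holds for any $\Pi^1_3$-iterable $\mathcal{P}^-$-mouse $\widehat{\mathcal{P}}$: since $\widehat{\mathcal{P}}$ and $\mathcal{P}$ share $\mathcal{P}^-$, the tree $\widehat{\mathcal{T}}$ on $\widehat{\mathcal{P}}$ satisfies $\widehat{\mathcal{T}}^- = \mathcal{T}^-$, and the $\mathcal{Q}$-structures along the two trees are identical.

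For clause 2 of the definition, let $\mathcal{T}$ be a short tree of limit length on $\mathcal{P}$. By shortness, $M_2^*(\mathcal{M}(\mathcal{T}))$ is a proper 2-small initial segment of $M_2^{\#}(\mathcal{M}(\mathcal{T}))$. Since $\mathcal{T}^-$ is a countable tree on the countable premouse $\mathcal{P}^-$, clause 2(c) gives that for any $\eta < \delta^{\mathcal{P}}$ bounding $\lh(\mathcal{T}^-)$, the strategy $\Sigma$ is defined at $\mathcal{T}^-$, yielding the cofinal branch $b$ with $\mathcal{Q}(b,\mathcal{T}^-) = M_2^*(\mathcal{M}(\mathcal{T}^-)) = M_2^*(\mathcal{M}(\mathcal{T}))$, as required. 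For clause 1, if $\mathcal{T}$ has a last model $\mathcal{M}_{\alpha}^{\mathcal{T}}$ (obtained by iterating along the $\Sigma$-branches at all intermediate limit stages, each short by hypothesis), the analogous lift produces a last model $\mathcal{M}_{\alpha}^{\widehat{\mathcal{T}}}$ of $\widehat{\mathcal{T}}$; to verify its $\Pi^1_3$-iterability, construed as a $\pi_{0,\alpha}^{\mathcal{T}}(\mathcal{P}^-)$-mouse when $[0,\alpha]_T$ does not drop and as a $\emptyset$-mouse otherwise, one pulls back any putative iteration through $\widehat{\mathcal{T}}$ to a putative iteration of $\widehat{\mathcal{P}}$ and invokes the $\Pi^1_3$-iterability of $\widehat{\mathcal{P}}$.

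The main obstacle is establishing that the branch $b$ with $\mathcal{Q}(b, \mathcal{T}\res\alpha) = M_2^*(\mathcal{M}(\mathcal{T}\res\alpha))$ is unique at each limit $\alpha$, so that the strategies for $\mathcal{P}^-$ (via $\Sigma$) and for $\widehat{\mathcal{P}}$ (via its $\Pi^1_3$-strategy) necessarily agree on the branch chosen. This is standard $\mathcal{Q}$-structure uniqueness: two distinct cofinal branches both yielding $M_2^*(\mathcal{M}(\mathcal{T}\res\alpha))$ as their $\mathcal{Q}$-structure would, via comparison of these two copies of the same $\Pi^1_3$-iterable 2-small $\mathcal{Q}$-structure and a Dodd-Jensen argument, force the branches to coincide. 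Once uniqueness is in place, the remaining bookkeeping of lifting trees on $\mathcal{P}^-$ to trees on $\mathcal{P}$ and on $\widehat{\mathcal{P}}$ through the $M_2^{\#}$-operator, together with the pullback through $\widehat{\mathcal{T}}$ used for iterability of the last model, is routine given the full hypothesis of suitability.
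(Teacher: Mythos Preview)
Your argument has a genuine gap at the point where you invoke clause 2(c). That clause is part of the list introduced by ``$\mathcal{P}$ satisfies the following'', so it is a first-order statement holding \emph{inside} $\mathcal{P}$: for every $\eta<\delta^{\mathcal{P}}$, $\mathcal{P}$ believes that $\mathcal{P}^-$ is $(\eta,\eta)$-iterable via the $2$-small-guided strategy. This only speaks about trees that are elements of $\mathcal{P}$ and have length below some $\eta<\delta^{\mathcal{P}}$ from $\mathcal{P}$'s point of view. The putative short tree $\mathcal{T}$ in the definition of short-tree-iterability is an arbitrary countable tree in $V$; there is no reason for $\mathcal{T}\in\mathcal{P}$, and there is no reason for $\lh(\mathcal{T})<\delta^{\mathcal{P}}$ (both are just countable ordinals in $V$). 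So the sentence ``for any $\eta<\delta^{\mathcal{P}}$ bounding $\lh(\mathcal{T}^{-})$, the strategy $\Sigma$ is defined at $\mathcal{T}^{-}$'' does not follow. The same problem infects your treatment of clause~1: you need to know that the branches already present in $\mathcal{T}$ at earlier limit stages are the $\mathcal{Q}$-structure-guided ones before any pullback to $\widehat{\mathcal{P}}$ is meaningful, and that is precisely what is at issue.

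The paper's proof supplies exactly the missing step. One observes that the existence of a $\mathcal{P}$-bad pair $(\mathcal{T},\widehat{\mathcal{P}})$ is $\Sigma^1_4$ in a code for $\mathcal{P}$, hence by $M_2(z)\prec_{\Sigma^1_4}V$ a bad pair exists in $M_2(\mathcal{P})^{\coll(\omega,\mathcal{P})}$. Taking a countable hull and applying the condensation lemma (Lemma~\ref{lem:condensation_suitable}) yields $\mathcal{H}\trianglelefteq\mathcal{P}$, a $\mathcal{Q}\triangleleft\mathcal{P}$ which is the collapse of $\mathcal{P}$, and a generic $g\in\mathcal{P}$ with $\mathcal{H}[g]\models$ ``there is a $\mathcal{Q}$-bad pair''. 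Since $\mathcal{H}[g]\prec_{\Sigma^1_3}\mathcal{P}$ (via genericity iterations and correct computation of $M_1^{\#}$), $\mathcal{P}$ itself sees a $\mathcal{Q}$-bad pair. Now $\mathcal{Q}\triangleleft\mathcal{P}$ and, by clause 2(c) applied \emph{inside} $\mathcal{P}$, $\mathcal{Q}$ is $(\omega_1,\omega_1)$-iterable in $\mathcal{P}$, contradicting badness. In short, the reflection/condensation step is not optional bookkeeping; it is the device that brings the external tree into the scope of clause 2(c).
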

  \begin{proof}
    Suppose not. There is then a putative short tree $\mathcal{T}$ on $\mathcal{P}$ and a  $\Pi^1_3$-iterable $\mathcal{P}^{-}$-mouse $\widehat{\mathcal{P}}$ such that either
    \begin{enumerate}
    \item $\lh(\mathcal{T}) = \alpha+1$ is a successor, $[0,\alpha]_T$ drops,  $\mathcal{M}_{\alpha}^{\mathcal{T}}$ is not $\Pi^1_3$-iterable, or 
    \item $\lh(\mathcal{T}) = \alpha+1$ is a successor, $[0,\alpha]_T$ does not drop, letting $\widehat{\mathcal{T}}$  be $\mathcal{T}$ construed as a putative tree on $\mathcal{P}$, then $\mathcal{M}_{\alpha}^{\widehat{\mathcal{T}}}$ is not a $\Pi^1_3$-iterable $\pi_{0,\alpha}^{\mathcal{T}}(\mathcal{P}^{-})$-mouse, or
    \item $\lh(\mathcal{T}) = \lambda$ is a limit, there is a $\delta(\mathcal{T})$-sound, $\Pi^1_3$-iterable $\mathcal{M}(\mathcal{T})$-mouse $\mathcal{R}$ that can be regarded as an $\emptyset$-premouse with $\rho_{\omega}(\mathcal{R}) < \delta(\mathcal{T})$, but there is no cofinal branch $b$ such that $\mathcal{Q}(b,\mathcal{T}) =\mathcal{R}$.
    \end{enumerate}
The existence of a $\mathcal{P}$-bad pair $(\mathcal{T}, \widehat{\mathcal{P}})$ is $\Sigma^1_4$ in the code of $\mathcal{P}$. By Steel \cite{steel_projective_wo_1995}, $M_2(z) \elem_{\Sigma^1_4} V$ for any real $z$. Hence, a bad pair can be found in $M_2(\mathcal{P})^{\coll(\omega,\mathcal{P})}$. Working in $M_2(\mathcal{P})$, take a countable elementary substructure $\mathcal{N} \elem M_2(\mathcal{P}) | \eta$, where $\eta$ is the successor of $o(\mathcal{P})$ in $M_2(\mathcal{P})$. $\mathcal{H}$ is the transitive collapse of $\mathcal{N}$, which is by Lemma~\ref{lem:condensation_suitable} an initial segment of $\mathcal{P}$. Let $\mathcal{Q}$ be the image of $\mathcal{P}$ under the transitive collapsing map. Take $g \in \mathcal{P}$ which is generic over $\mathcal{H}$ for $\coll(\omega, \mathcal{Q})$.  So $\mathcal{H}[g] \models ``$there is a $\mathcal{Q}$-bad pair $(\mathcal{U}, \widehat{\mathcal{Q}})$''. Note that $\mathcal{H}[g] \models ``$I am closed under the $M_1^{\#}$-operator'', therefore as $\mathcal{H} \inisegeq \mathcal{P}$, the $M_1^{\#}$-operators are computed correctly in $\mathcal{H}[g]$, which implies that $\mathcal{H}[g] \elem_{\Sigma^1_3} \mathcal{P}$ by genericity iterations (cf.\ \cite[Section 7.2]{steel-handbook}). 
So  $(\mathcal{U}, \widehat{\mathcal{Q}})$, being a $\mathcal{Q}$-bad pair from the point of view of $\mathcal{H}[g]$, is also seen as a $\mathcal{Q}$-bad pair in $\mathcal{P}$. However, $\mathcal{Q} \iniseg \mathcal{P}$ and $\mathcal{Q}$ is $(\omega_1,\omega_1)$-iterable in $\mathcal{P}$ by suitability. Contradiction!
  \end{proof}

If $\mathcal{P}$ is suitable and $\mathcal{T}$ is a short tree on $\mathcal{P}$ such that $\pi^{\mathcal{T}}$ exists, we can define an order preserving function
\begin{displaymath}
  g^{\mathcal{T}} : \bolddelta{3} \to \bolddelta{3}
\end{displaymath}
as follows: If $\mathcal{P}'$ is a $\Pi^1_3$-iterable $\mathcal{P}^{-}$-mouse, let $f^{\mathcal{T}}(\mathcal{P}')$ be the last model of $\mathcal{T}$ construed as a tree on $\mathcal{P}'$, and define
\begin{displaymath}
  g^{\mathcal{T}} (\wocode{\mathcal{P}'}_{\mathcal{P}^{-}}) = \wocode{ f^{\mathcal{T}} (\mathcal{P}') }_{\pi^{\mathcal{T}}(\mathcal{P}^-)}.
\end{displaymath}
$g^{\mathcal{T}}$ is well-defined: Suppose $\wocode{\mathcal{P}'}_{\mathcal{P}^{-}} = \wocode{\mathcal{P}''}_{\mathcal{P}^{-}}$ and suppose without loss of generality that $\mathcal{P}''$ is a nondropping iterate of $\mathcal{P}'$ via $\mathcal{U}$ above $\mathcal{P}^{-}$. We would like to show that $\wocode{f^{\mathcal{T}}(\mathcal{P}')}_{\pi^{\mathcal{T}}(\mathcal{P}^-)} = \wocode{f^{\mathcal{T}}(\mathcal{P}'')}_{\pi^{\mathcal{T}}(\mathcal{P}^-)} $. On the one hand, the tree $\mathcal{P}'$-to-$f(\mathcal{P}')$ is copied to the tree $\mathcal{P}''$-to-$f(\mathcal{P}'')$ according to $\pi^{\mathcal{U}}$ (both trees are just $\mathcal{T}$ construed on different models), so $\pi^{\mathcal{U}}$ induces a copying map from $f^{\mathcal{T}}(\mathcal{P}')$ to $f^{\mathcal{T}}(\mathcal{P}'')$, giving that  $\wocode{f^{\mathcal{T}}(\mathcal{P}')}_{\pi^{\mathcal{T}}(\mathcal{P}^-)} \leq \wocode{f^{\mathcal{T}}(\mathcal{P}'')}_{\pi^{\mathcal{T}}(\mathcal{P}^-)} $. 
On the other hand, we can copy $\mathcal{U}$ to a tree on $f(\mathcal{P}')$ according to the iteration map from $\mathcal{P}'$ to $f(\mathcal{P}'')$, leading to an iteration tree $\mathcal{V}$ on $f(\mathcal{P}')$ with last model $\mathcal{Q}$ so that $\pi^{\mathcal{V}}$ exists. Note that $\mathcal{U}$ is above $\mathcal{P}^{-}$ while $\mathcal{T}$ is based on $\mathcal{P}^{-}$. The technique in \cite[Lemma 3.2]{MR3397344} enables us to define a map from $f(\mathcal{P}'')$ to $\mathcal{Q}$, giving that  $\wocode{f^{\mathcal{T}}(\mathcal{P}'')}_{\pi^{\mathcal{T}}(\mathcal{P}^-)} \leq  \wocode{f^{\mathcal{T}}(\mathcal{P}')}_{\pi^{\mathcal{T}}(\mathcal{P}^-)} $. A similar argument shows that $g^{\mathcal{T}}$ is order preserving. 

  \begin{corollary}
    \label{coro:suitable_short_tree_final_is_suitable}
    Suppose $\mathcal{P}$ is suitable and $\mathcal{T}$ is a short tree on $\mathcal{P}$ with last model $\mathcal{Q}$ such that $\pi^{\mathcal{T}}$ exists. Then $\mathcal{Q}$ is suitable. 
  \end{corollary}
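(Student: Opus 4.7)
The plan is to verify the two clauses of Definition~\ref{def:suitable} for $\mathcal{Q}$, with $\delta^{\mathcal{Q}} = \pi^{\mathcal{T}}(\delta^{\mathcal{P}})$. By the characterization noted in the paragraph after Definition~\ref{def:suitable}, since $\mathcal{Q}$ is countable it suffices to check that $\mathcal{Q}$ satisfies the first-order part of Clause~2 and that $\mathcal{Q}$ is full. Clause~2 transfers immediately by elementarity of $\pi^{\mathcal{T}}$: items 2(a), 2(b), 2(d) are first-order in $\mathcal{P}$ and mention only the internal $M_2^{\#}$-operator, which $\mathcal{P}$ computes correctly below $\delta^{\mathcal{P}}$ by 2(b); item 2(c) is likewise first-order in $\mathcal{P}$ since, for $\eta < \delta^{\mathcal{P}}$, the $M_2^{*}$ mice guiding the partial strategy live inside $\mathcal{P}$. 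Moreover, applying Lemma~\ref{lem:suitable_is_short_tree_iterable} with $\widehat{\mathcal{P}} = \mathcal{P}$ to the short, non-dropping $\mathcal{T}$ shows $\mathcal{Q}$ is itself a $\Pi^1_3$-iterable $\mathcal{Q}^{-}$-mouse, so $\mathcal{Q}$ is at least eligible to be full.

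The substance is fullness. I would argue by copying $\mathcal{T}$ up to a tree on $M_2^{\#}(\mathcal{P}^{-})$. Because $\mathcal{P}$ is full, $\mathcal{P}$ is a $\emptyset$-mouse initial segment of $M_2^{\#}(\mathcal{P}^{-})$, so every extender used on $\mathcal{T}$ appears on the extender sequence of $M_2^{\#}(\mathcal{P}^{-})$. This lets one form a normal tree $\widetilde{\mathcal{T}}$ on $M_2^{\#}(\mathcal{P}^{-})$ whose model $\mathcal{M}_{\alpha}^{\widetilde{\mathcal{T}}}$ is an iterate of $M_2^{\#}(\mathcal{P}^{-})$ whose initial segment up to the image of $(\delta^{\mathcal{P}+})^{M_2^{\#}(\mathcal{P}^{-})}$ agrees with $\mathcal{M}_{\alpha}^{\mathcal{T}}$. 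Iterability of $M_2^{\#}(\mathcal{P}^{-})$ (granted by the hypothesis of Theorem~\ref{thm:MS}) makes $\widetilde{\mathcal{T}}$ well-defined; non-dropping of the $\mathcal{T}$-main-branch yields non-dropping for $\widetilde{\mathcal{T}}$, and thus $\pi^{\widetilde{\mathcal{T}}}$ exists and extends $\pi^{\mathcal{T}}$. Uniqueness of the $M_2^{\#}$-operator over a self-wellordered base identifies the iterate with $M_2^{\#}(\pi^{\widetilde{\mathcal{T}}}(\mathcal{P}^{-})) = M_2^{\#}(\mathcal{Q}^{-})$. By elementarity, $\mathcal{Q} = \pi^{\widetilde{\mathcal{T}}}(\mathcal{P}) \trianglelefteq M_2^{\#}(\mathcal{Q}^{-})$ and $o(\mathcal{Q}) = (\delta^{\mathcal{Q}+})^{M_2^{\#}(\mathcal{Q}^{-})}$, which is exactly Clause~1 of Definition~\ref{def:suitable}.

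The main obstacle will be the careful execution of the copy step in the second paragraph: verifying that $\widetilde{\mathcal{T}}$ satisfies the normality and extender-agreement hypotheses on $M_2^{\#}(\mathcal{P}^{-})$, that non-dropping of the lifted main branch is genuinely inherited from $\mathcal{T}$, and that the final iterate identifies with $M_2^{\#}(\mathcal{Q}^{-})$ via uniqueness. The last point rests on the global hypothesis that every countable $M_2^{\#}(x)$ is $(\omega_1, \omega_1)$-iterable, which gives absoluteness of the $M_2^{\#}$-operator between $V$ and the iterate and pins down the identity of the last model.
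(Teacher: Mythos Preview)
Your approach is genuinely different from the paper's. The paper never lifts $\mathcal{T}$ to the single uncountable model $M_2^{\#}(\mathcal{P}^-)$; instead it uses the order-preserving map $g^{\mathcal{T}}:\bolddelta{3}\to\bolddelta{3}$ defined just before the corollary. Given any full $\Pi^1_3$-iterable $\mathcal{Q}^-$-mouse $\mathcal{Q}'$, one chooses a full $\Pi^1_3$-iterable $\mathcal{P}^-$-mouse $\mathcal{P}'$ with $g^{\mathcal{T}}(\wocode{\mathcal{P}'}_{\mathcal{P}^-})>\wocode{\mathcal{Q}'}_{\mathcal{Q}^-}$; then $f^{\mathcal{T}}(\mathcal{P}')$ dominates $\mathcal{Q}'$ in Dodd--Jensen rank and, by elementarity of $\pi^{\mathcal{T}}$ acting on $\mathcal{P}'$, thinks $o(\mathcal{Q})=(\delta^{\mathcal{Q}})^+$. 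This forces $\mathcal{Q}'$ to agree, which is fullness. The paper's argument thus uses only Lemma~\ref{lem:suitable_is_short_tree_iterable} applied to \emph{countable} $\widehat{\mathcal{P}}$, and the machinery of $g^{\mathcal{T}}$ is reused later (e.g.\ in the proof of Lemma~\ref{lem:bad_sequence}).

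Your lifting argument is natural, but there is a real gap. You need the construed tree $\widetilde{\mathcal{T}}$ on $M_2^{\#}(\mathcal{P}^-)$ to be according to its $(\omega_1,\omega_1)$-iteration strategy, so that the last model is iterable and uniqueness identifies it with $M_2^{\#}(\mathcal{Q}^-)$. But nothing in the hypotheses says that the cofinal branches taken in $\mathcal{T}$ at intermediate limit stages are the $Q$-structure branches; ``short'' only asserts that $Q$-structures exist, not that the given branches have them. If some branch $[0,\lambda)_T$ were a wrong wellfounded branch, then $\mathcal{M}_\lambda^{\widetilde{\mathcal{T}}}$ could be illfounded above $o(\mathcal{M}_\lambda^{\mathcal{T}})$, and your identification step collapses. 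One can argue (using Lemma~\ref{lem:suitable_is_short_tree_iterable} applied to $\mathcal{T}\res(\lambda+1)$, iterability of the resulting model, and a comparison of the putative $\mathcal{Q}(b,\mathcal{T}\res\lambda)$ with $M_2^{*}(\mathcal{M}(\mathcal{T}\res\lambda))$) that the branches \emph{are} the $Q$-structure branches, but this is a substantive claim you have neither stated nor proved. The paper's route via $g^{\mathcal{T}}$ sidesteps this entirely, since short-tree-iterability already guarantees $\Pi^1_3$-iterability of $f^{\mathcal{T}}(\mathcal{P}')$ for arbitrary countable $\mathcal{P}'$ regardless of which branches $\mathcal{T}$ carries.
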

  \begin{proof}
All the first-order-in-$\mathcal{P}$ properties in Definition~\ref{def:suitable} are preserved by elementarity. We need to show fullness.
 For any ${\mathcal{P}}'$, a full $\Pi^1_3$-iterable $\mathcal{P}^{-}$-mouse, ${\mathcal{P}}'\models o(\mathcal{P}) = (\delta^{\mathcal{P}})^{+}$, and hence by elementarity, $f^{\mathcal{T}}(\mathcal{P}') \models o(\mathcal{Q})  =  (\pi^{\mathcal{T}}(\delta^{\mathcal{P}}))^{+}$. If $\mathcal{Q}'$ is any full $\Pi^1_3$-iterable $\mathcal{Q}|\pi^{\mathcal{T}}(\delta^{\mathcal{P}})$-mouse, we may pick such $\mathcal{P}'$ with $g^{\mathcal{T}} (\wocode{\mathcal{P}'}_{\mathcal{P}^{-}}) > \wocode{\mathcal{Q}'}_{\pi^{\mathcal{T}}(\mathcal{P}^{-})}$, implying that  $\mathcal{Q}' \models o(\mathcal{Q})  =  (\pi^{\mathcal{T}}(\delta^{\mathcal{P}}))^{+}$. Hence $\mathcal{Q}$ is suitable and $\delta^{\mathcal{Q}} = \pi^{\mathcal{T}}(\delta^{\mathcal{P}})$. 
  \end{proof}

  \begin{definition}
    \label{def:pseudo_normal_iterate}
    Let $\mathcal{P}$ be suitable. $\mathcal{Q}$ is called a \emph{pseudo-normal-iterate} of $\mathcal{P}$ iff $\mathcal{Q}$ is suitable and there is a normal tree $\mathcal{T}$ on $\mathcal{P}$ such that either $\mathcal{Q}$ is the last model of $\mathcal{T}$, $\pi^{\mathcal{T}}$ exists, or $\mathcal{Q} = M_2(\mathcal{M}(\mathcal{T})) | ((\delta(\mathcal{T}))^{+})^{M_2(\mathcal{M}(\mathcal{T}))}$.
  \end{definition}

  \begin{definition}
    \label{def:finite_full_stack}
    Let $\mathcal{P}$ be suitable. $((\mathcal{T}_i: i < k), (\mathcal{P}_i: i \leq k))$ is called a \emph{finite full stack} on $\mathcal{P}$ iff $\mathcal{P}_0 = \mathcal{P}$ and for each $i$, $\mathcal{P}_{i+1}$ is a pseudo-normal-iterate of $\mathcal{P}_i$ witnessed by $\mathcal{T}_i$. 
  \end{definition}


  \begin{definition}
    \label{def:th_s}
    Suppose $\mathcal{P}$ is countable and suitable, $\alpha < \beta < \bolddelta{3}$, and $\alpha < \mathcal{N}_{\beta,\infty}^{\mathcal{P}^{-}}$. Then
    \begin{enumerate}
    \item $Th^{\mathcal{P}}_{(\alpha,\beta)} = \set{(\gcode{\varphi}, \xi)}{\xi < \delta^{\mathcal{P}}, \mathcal{N}_{\beta,\infty}^{\mathcal{P}^{-}} \models \varphi(\xi, \alpha)}$.
    \item $\gamma_{(\alpha,\beta)}^{\mathcal{P}} = \sup (Hull^{\mathcal{N}_{\beta,\infty}^{\mathcal{P}^{-}}} (\se{\alpha}) \cap \delta^{\mathcal{P}})$.
    \item $H_{(\alpha,\beta)}^{\mathcal{P}}= Hull^{\mathcal{N}_{\beta,\infty}^{\mathcal{P}^{-}}} (\gamma^{\mathcal{P}}_{(\alpha,\beta)} \cup \se{\alpha})$.
    \item By Theorem~\ref{thm:higher_sharp}, define $(\mathcal{P}^{-})^{3\#}_n = Th^{\mathcal{P}}_{(\alpha,\beta)} $ for $\mu_n$-a.e.\  $(\alpha,\beta)$ and $  (\mathcal{P}^{-})^{3\#} = \oplus_{n<\omega} (\mathcal{P}^{-})^{3\#}_n.$
    \item $\gamma^{\mathcal{P}}_{3\#,n} = \gamma^{\mathcal{P}}_{(\alpha,\beta)}$ for $\mu_n$-a.e.\ $(\alpha,\beta)$.
    \item $H^{\mathcal{P}}_{3\#,n} = H^{\mathcal{P}}_{(\alpha,\beta)}$ for $\mu_n$-a.e.\ $(\alpha,\beta)$.
    \end{enumerate}
Suppose $\mathcal{T}$ is a normal iteration tree on $\mathcal{P}$ and $b$ is a cofinal branch of $\mathcal{T}$. $b$ is said to \emph{respect} $(\alpha,\beta)$ iff $\mathcal{Q}=\mathcal{M}_b^{\mathcal{T}}$ is suitable and
\begin{displaymath}
  \pi_b^{\mathcal{T}} (Th^{\mathcal{P}}_{(\alpha,\beta)}) = Th^{\mathcal{Q}}_{(\alpha,\beta)}. 
\end{displaymath}
$b$ is said to \emph{respect} $(\cdot)^{3\#}_n$ iff $\mathcal{Q} = \mathcal{M}^{\mathbb{T}}_b$ is suitable and
\begin{displaymath}
  \pi_b^{\mathcal{T}} ( (\mathcal{P}^{-})^{3\#}_n) = (\mathcal{Q} ^{-})^{3\#}_n.
\end{displaymath}
 $\mathcal{P}$ is \emph{$n$-iterable} iff for any finite full stack $((\mathcal{T}_i: i < k), (\mathcal{P}_i : i \leq k))$ on $\mathcal{P}$, there is $(b_i : i < k)$ such that each $b_i$ respects $(\cdot)^{3\#}_n$.
  \end{definition}

By Theorem~\ref{thm:higher_sharp}, for any countable suitable $\mathcal{P}$, we must have
\begin{displaymath}
  \sup_{n<\omega} \gamma^{\mathcal{P}}_{3\#,n} = \delta^{\mathcal{P}}.
\end{displaymath}




\begin{lemma}
  \label{lem:bad_sequence}
  Let $n<\omega$. Then there is a countable, $n$-iterable suitable mouse. 
\end{lemma}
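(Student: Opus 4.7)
The plan is to argue by contradiction, using Theorem~\ref{thm:higher_sharp} to lift iteration trees on each $\mathcal{P}_i$ to the $\Pi^1_3$-iterable $\mathcal{P}_i^{-}$-mouse of Dodd--Jensen rank $\beta$ for $\mu_n$-typical $(\alpha,\beta)$, and then to collapse branch choices via countable completeness of $\mu_n$. Suppose for contradiction that no countable suitable premouse is $n$-iterable. By Theorem~\ref{thm:MS}, fix a countable suitable $\mathcal{P}_0$; by our assumption, fix a finite full stack $((\mathcal{T}_i : i < k), (\mathcal{P}_i : i \leq k))$ on $\mathcal{P}_0$ admitting no system of branches $(b_i)_{i<k}$ respecting $(\cdot)^{3\#}_n$. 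We shall exhibit such a system.

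For each $i < k$ and $\mu_n$-a.e.\ $(\alpha, \beta)$, Theorem~\ref{thm:higher_sharp}(1) provides a full $\Pi^1_3$-iterable $\mathcal{P}_i^{-}$-mouse $\mathcal{R}_i = \mathcal{R}_i^{(\alpha,\beta)}$ with $\wocode{\mathcal{R}_i}_{DJ(\mathcal{P}_i^{-})} = \beta$; fullness gives $\mathcal{P}_i \inisegeq \mathcal{R}_i$. Construe $\mathcal{T}_i$ as a putative tree on $\mathcal{R}_i$, which is legitimate since the extenders used in $\mathcal{T}_i$ sit on the common sequence below $\delta^{\mathcal{P}_i}$. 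By the genuine $(\omega_1, \omega_1)$-iterability of $\mathcal{R}_i$, together with the short-tree analysis of Lemma~\ref{lem:suitable_is_short_tree_iterable} in the maximal case, the lift has a cofinal branch $c_i$, inducing a cofinal branch $b_i^{(\alpha, \beta)}$ of $\mathcal{T}_i$ whose direct limit $\mathcal{M}_{b_i^{(\alpha,\beta)}}^{\mathcal{T}_i}$ is suitable (by Corollary~\ref{coro:suitable_short_tree_final_is_suitable} in the short non-dropping case, and by agreement of the $\mathcal{Q}$-structure with $M_2^{*}(\mathcal{M}(\mathcal{T}_i))$ in the maximal case).

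The lifted iteration from $\mathcal{R}_i$ to its iterate $\mathcal{R}_i'$ induces an elementary map $\mathcal{N}_{\beta,\infty}^{\mathcal{P}_i^{-}} \to \mathcal{N}_{\beta,\infty}^{(\mathcal{M}_{b_i^{(\alpha,\beta)}}^{\mathcal{T}_i})^{-}}$ fixing the $\mu_n$-typical indiscernibles $\alpha$ and $\beta$, so by elementarity one gets $\pi_{b_i^{(\alpha,\beta)}}^{\mathcal{T}_i}(Th^{\mathcal{P}_i}_{(\alpha, \beta)}) = Th^{\mathcal{M}_{b_i^{(\alpha,\beta)}}^{\mathcal{T}_i}}_{(\alpha, \beta)}$ for $\mu_n$-a.e.\ $(\alpha, \beta)$. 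Each $\mathcal{T}_i$ has only countably many cofinal branches, so countable completeness of $\mu_n$ applied to the map $(\alpha,\beta) \mapsto (b_i^{(\alpha,\beta)})_{i<k}$ yields a $\mu_n$-measure-one set on which this map takes a constant value $(b_i)_{i<k}$. Combining this with the pointwise theory-preservation above and unfolding the definition of $(\cdot)^{3\#}_n$ via $\mu_n$, one obtains $\pi_{b_i}^{\mathcal{T}_i}((\mathcal{P}_i^{-})^{3\#}_n) = ((\mathcal{M}_{b_i}^{\mathcal{T}_i})^{-})^{3\#}_n$ for each $i < k$, i.e.\ $(b_i)_{i<k}$ respects $(\cdot)^{3\#}_n$, contradicting the choice of the stack.

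The main technical obstacle is the maximal-tree case together with the indiscernibility argument for the $\mu_n$-typical fixed points of the iteration. One must verify that the branch selected by the $\mathcal{Q}$-structure $M_2^{*}(\mathcal{M}(\mathcal{T}_i))$ on the $\mathcal{R}_i$-side projects to a branch of $\mathcal{T}_i$ with genuinely suitable direct limit, so that the \emph{respect} clause of Definition~\ref{def:th_s} is applicable; and one must show that the induced map on $\mathcal{N}_{\beta,\infty}^{\mathcal{P}_i^{-}}$ really does fix $\beta$ for $\mu_n$-almost every $(\alpha,\beta)$, so that the target direct limit is indexed by the \emph{same} $\beta$ as the source. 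Both rest on the interpretation of $\mu_n$ via $\mathbb{L}[T_3]$-indiscernibles and the equivalence $(\mathcal{P}_i^{-})^{3\#} \equiv_m M_2^{\#}(\mathcal{P}_i^{-})$ from Theorem~\ref{thm:higher_sharp} and \cite{sharpI}.
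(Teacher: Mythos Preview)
Your central claim—that the lifted iteration map induces an elementary embedding $\mathcal{N}_{\beta,\infty}^{\mathcal{P}_i^{-}} \to \mathcal{N}_{\beta,\infty}^{(\mathcal{M}_{b_i}^{\mathcal{T}_i})^{-}}$ fixing $\alpha$ for $\mu_n$-a.e.\ $(\alpha,\beta)$—is not justified and, as the paper's own argument implicitly shows, is false in general. The lifted iteration $\mathcal{R}_i \to \mathcal{R}_i'$ induces a map whose target is $\mathcal{N}_{\beta',\infty}^{(\mathcal{M}_{b_i}^{\mathcal{T}_i})^{-}}$ with $\beta' = g^{\mathcal{T}_i}(\beta)$, and there is no reason for $g^{\mathcal{T}_i}(\beta)=\beta$: the function $g^{\mathcal{T}_i}$ is merely order-preserving, and the paper explicitly works only with the inequality $\tau_i(\bar{\beta}) \geq \bar{\beta}$. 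The appeal to ``$\mu_n$-typical indiscernibles'' does not help, because $(\alpha,\beta)$ are indiscernibles for the models $\mathcal{N}_{\beta,\infty}^x$ in the sense of Theorem~\ref{thm:higher_sharp}, not fixed points of arbitrary order-preserving maps in $\mathbb{L}[T_3]$; nothing in \cite{sharpI} yields that a single iteration map of a countable tree fixes such pairs. Your last paragraph correctly flags this as ``the main technical obstacle'' but does not resolve it.

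The paper's route is essentially different and avoids this obstacle entirely. It does not try to show that any one branch respects $(\alpha,\beta)$; instead it (i) passes to an \emph{infinite} bad stack, (ii) uses Theorem~\ref{thm:compare_mouse_order} to see that the existence of an $(\alpha,\beta)$-bad sequence is $\Sigma^1_4$, (iii) reflects it into $M_2(\widehat{\mathcal{P}})^{\coll(\omega,\widehat{\mathcal{P}})}$ and then, via the condensation Lemma~\ref{lem:condensation_suitable}, pulls it down into $\mathcal{P}$ itself, and (iv) runs the infinite stack there against the \emph{internal} iteration strategy of $\mathcal{P}$ (Clause~2(c) of Definition~\ref{def:suitable}). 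Well-foundedness of that internal direct limit forces the ordinals $\pi^{\mathcal{U}_i}(\xi_i)$ to eventually stabilise, so all but finitely many $b_i$ respect $(\bar{\alpha},\bar{\beta})$, contradicting badness. Your finite-stack setup together with a per-branch indiscernibility claim cannot substitute for this reflection-plus-well-foundedness mechanism; the argument you outline would, if it worked, prove the stronger statement that \emph{every} countable suitable premouse is $n$-iterable for every $n$, which is not what the paper obtains (hence the pseudo-comparison step afterwards).
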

\begin{proof}
  Otherwise, there is $((\mathcal{T}_i: i < \omega), (\mathcal{P}_i: i <\omega))$ such that $\mathcal{P}_0 =\mathcal{P}$ is suitable, $\mathcal{T}_i$ is a normal tree on $\mathcal{P}_i$, but for infinitely many $i$, there is no cofinal branch $b_i$ through $\mathcal{T}_i$ that respects $(\cdot)^{3\#}_n$. 

Fix $z \in \mathbb{R}$ coding $(\vec{\mathcal{T}}, \vec{\mathcal{P}})$. 
Fix $\alpha < \beta < \bolddelta{3}$ such that for any $i$, $(\mathcal{P}_i)^{3\#}_n = Th^{\mathcal{P}}_{(\alpha,\beta)}$ and if $\wocode{\mathcal{R}}_{DJ(\mathcal{P}_i^{-})} = \beta$, then $\mathcal{R}$ is full and $\mathcal{R}_{\infty} = \mathcal{N}_{\beta,\infty} ^{\mathcal{P}_i^{-}} = M_{2,\infty}^{\#}(\mathcal{P}_i^{-}) | \beta$. Thus, for infinitely many $i$, there is no cofinal branch $b_i$ through $\mathcal{T}_i$ that respects $(\alpha, \beta)$. We call $(\vec{\mathcal{T}}, \vec{\mathcal{P}})$ an $(\alpha, \beta)$-bad sequence based on $\mathcal{P}$. 

Let $\widehat{\mathcal{P}}$ be a $\Pi^1_3$-iterable $\mathcal{P}^{-}$-mouse and $\eta \in \widehat{\mathcal{P}}$ so that $\wocode{\widehat{\mathcal{P}}}_{\mathcal{P}^{-}} = \beta$ and $\pi^{\mathcal{P}^{-}}_{\widehat{\mathcal{P}},\infty}(\eta) = \alpha$. By Theorem~\ref{thm:compare_mouse_order}, the statement
\begin{quote}
``There is an $(\pi^{\mathcal{P}^{-}}_{\widehat{\mathcal{P}},\infty}(\eta) ,\wocode{\widehat{\mathcal{P}}}_{\mathcal{P}^{-}} )$-bad sequence $(\vec{\mathcal{T}}, \vec{\mathcal{P}})$ based on $\mathcal{P}$''
\end{quote}
is $\Sigma^1_4$ in the code of $\widehat{\mathcal{P}}$ and absolute between transitive models closed under the $M_1^{\#}$-operator.  It is a true statement in $V$, so by absoluteness, true  $M_2(\widehat{\mathcal{P}})^{\coll(\omega,\widehat{\mathcal{P}})}$ as well. By our choice of $\beta$,  $\widehat{\mathcal{P}}$ is full, so $M_2(\widehat{\mathcal{P}})$ can be regarded as an $\emptyset$-premouse and $o(\widehat{\mathcal{P}})$ is a cardinal and cutpoint in $M_2(\widehat{\mathcal{P}})$.
As in the proof of Lemma~\ref{lem:suitable_is_short_tree_iterable}, we get $\mathcal{H} \iniseg \mathcal{P}$ and $g \in \mathcal{P}$ generic over $\mathcal{H}$, $ \se{\mathcal{Q}, \vec{\mathcal{Q}}, \vec{\mathcal{U}} , \xi} \in \mathcal{H}[g]$ so that
\begin{quote}
$    \mathcal{H}[g] \models $``$(\vec{\mathcal{U}}, \vec{\mathcal{Q}}) $ is an $(\pi^{\mathcal{Q}^{-}}_{\widehat{\mathcal{Q}}, \infty} (\eta) , \wocode{\widehat{\mathcal{Q}}}_{DJ(\mathcal{Q}^{-})} )$-bad sequence based on $\mathcal{Q}$.''
\end{quote}
As $\mathcal{H}[g] \elem_{\Sigma^1_3} \mathcal{P}$, we have $(\bar{\alpha},\bar{\beta}) \in \mathcal{P}$ so that  
\begin{quote}
$   \mathcal{P} \models $``$(\vec{\mathcal{U}}, \vec{\mathcal{Q}}) $ is an $(\bar{\alpha}, \bar{\beta})$-bad sequence and $\wocode{\widehat{\mathcal{Q}}}_{DJ(\mathcal{Q}^{-})} = \bar{\beta}$, $\pi^{\mathcal{Q}^{-}}_{\widehat{\mathcal{Q}}, \infty} (\eta) = \bar{\alpha}$''.
\end{quote}
For the rest of this proof, we work in $\mathcal{P}$. Pick $\widehat{\mathcal{Q}}_i $ and $\xi_i \in \widehat{\mathcal{Q}}_i$ so that
\begin{displaymath}
\mathcal{P}\models \wocode{\widehat{\mathcal{Q}}_i}_{\mathcal{Q}_i^{-}} = \bar{\beta} \wedge \pi^{{\mathcal{Q}}_i^{-}}_{\widehat{\mathcal{Q}}_i, \infty} (\xi_i) = \bar{\alpha}  .
\end{displaymath}
We define $(\mathcal{R}_i , \mathcal{S}_i, b_i, \widehat{\mathcal{U}}_i : i < \omega)$ and $(\mathcal{V}_i, \mathcal{W}_i : 1 \leq i < \omega)$ inductively such that:
\begin{enumerate}
\item $\mathcal{R}_i \lengthen \mathcal{Q}_i$, $\mathcal{R}_i$ is $\Pi^1_3$-iterable above $\mathcal{Q}_i^{-}$, $\mathcal{R}_0  = \widehat{\mathcal{Q}}_0$;
\item $\widehat{\mathcal{U}}_i $ is $\mathcal{U}_i$ construed as an iteration tree on $\mathcal{R}_i$;
\item $b_i$ is the cofinal branch of $\widehat{\mathcal{U}}_i$ chosen by the internal strategy of $\mathcal{P}$;
\item $\mathcal{S}_{i+1}$ is the last model of $\widehat{\mathcal{U}}_i \concat b_i$;
\item $(\mathcal{V}_i, \mathcal{W}_i)$ is the comparison of $(\mathcal{S}_i, \widehat{\mathcal{Q}}_i)$ and $\mathcal{R}_i$ is the last model of $\mathcal{W}_i$. 
\end{enumerate}

\begin{tikzpicture}
  \matrix (m) [matrix of math nodes,row sep=1.5em,column sep=4em,minimum width=2em]
  {
     \mathcal{R}_0  = \widehat{\mathcal{Q}_0} &  \mathcal{S}_1& & & \\
    \mathcal{Q}_0 & \mathcal{Q}_1  & & &\\
   & \widehat{\mathcal{Q}}_1 & \mathcal{R}_1 & \mathcal{S}_2  & \\
   & & \mathcal{Q}_1 & \mathcal{Q}_2 &\\
  &&&                \widehat{\mathcal{Q}_2} & \mathcal{R}_3 \\} ;
  \path[-]
    (m-1-1) edge node [above] {$\widehat{\mathcal{U}}_0, b_0$} (m-1-2)
    (m-2-1) edge node [above] {${\mathcal{U}}_0$} (m-2-2)
    (m-1-2) edge node [above] {$\mathcal{V}_1$} (m-3-3)
    (m-3-2) edge node [above] {$\mathcal{W}_1$} (m-3-3)
    (m-3-3) edge node [above] {$\widehat{\mathcal{U}}_1, b_1$} (m-3-4)
    (m-4-3) edge node [above] {${\mathcal{U}}_1$} (m-4-4)
    (m-3-4) edge node [above] {$\mathcal{V}_2$} (m-5-5)
    (m-5-4) edge node [above] {$\mathcal{W}_2$} (m-5-5);
  \path[color=white]
    (m-1-1) edge node [sloped,color=black]{$\lengthen$} (m-2-1)
(m-1-2) edge node [sloped,color=black]{$\lengthen$} (m-2-2)
(m-3-2) edge node [sloped,color=black]{$\lengthen$} (m-2-2)
(m-3-3) edge node [sloped,color=black]{$\lengthen$} (m-4-3)
(m-3-4) edge node [sloped,color=black]{$\lengthen$} (m-4-4)
(m-5-4) edge node [sloped,color=black]{$\lengthen$} (m-4-4)
;
\end{tikzpicture}

By monotonicity of the function $g^{\widehat{\mathit{U}} \concat b_i}: \bolddelta{3} \to \bolddelta{3}$, we can inductively see that for each $i$, $\wocode{\mathcal{S}_i}_{DJ(\mathcal{Q}_i^{-})} \geq \bar{\beta}$,  the main branch of $\mathcal{W}_i$ does not drop, and $\wocode{\mathcal{R}_i}_{DJ(\mathcal{Q}_i^{-})} \geq \bar{\beta}$. The stack
\begin{displaymath}
\widehat{\mathcal{U}}_0 \concat b_0 \concat \mathcal{V}_1 \concat \widehat{\mathcal{U}}_1 \concat b_1 \concat \mathcal{V}_2 \concat \dots
 \end{displaymath}
is according to the internal strategy of $\mathcal{P}$. So for some $m< \omega$, we have for any $i>m$, $\pi^{\mathcal{U}_i}_{b_i}$ exists and $\pi^{\mathcal{V}_i}$ exists. The map $\pi^{\widehat{\mathcal{U}}_i \concat b_i \concat \mathcal{V}_{i+1}}$ induces a map $\tau_i : (\mathcal{R}_i)^{\mathcal{Q}_i^{-}}_{\infty} \to (\mathcal{R}_{i+1})^{\mathcal{Q}_{i+1}^{-}}_{\infty}$ so that $\tau_i \circ \pi^{\mathcal{Q}_i^{-}}_{\mathcal{R}_i,\infty} =\pi^{\mathcal{Q}_{i+1}^{-}}_{\mathcal{R}_{i+1},\infty}  \circ  \pi^{\widehat{\mathcal{U}}_i \concat b_i \concat \mathcal{V}_{i+1}} $. Clearly $\tau_i (\bar{\beta}) \geq \bar{\beta}$. So $\pi^{\widehat{\mathcal{U}}_i \concat b_i \concat \mathcal{V}_{i+1}} \circ \pi^{\mathcal{U}_{i}} (\xi_{i}) \geq \pi^{\mathcal{U}_{i+1}} (\xi_{i+1})$. So we must have some $m < m' < \omega$ so that for any $i > m'$,  $\pi^{\widehat{\mathcal{U}}_i \concat b_i \concat \mathcal{V}_{i+1}} \circ \pi^{\mathcal{U}_{i}} (\xi_{i}) = \pi^{\mathcal{U}_{i+1}} (\xi_{i+1})$. In other words, for any $i > m'$, $b_i$ respects $(\bar{\alpha},\bar{\beta})$, contradicting to the assumption that $(\vec{\mathcal{U}}, \vec{\mathcal{Q}}) $ is an $(\bar{\alpha}, \bar{\beta})$-bad sequence.
\end{proof}

By Lemma~\ref{lem:bad_sequence}, we can find $(\mathcal{P}_n:n < \omega)$ where $\mathcal{P}_n$ is a countable, $n$-iterable suitable premouse. The pseudo-comparison leads to countable iteration trees $(\mathcal{T}_n: n < \omega)$ and a suitable $\mathcal{Q}$ so that $\mathcal{T}_n$ is an iteration tree on $\mathcal{P}_n$ with last model $\mathcal{Q}$. $\mathcal{Q}$ is then $n$-iterable for any $n<\omega$. The usual limit branching argument (cf.\ \cite[Lemma 4.12]{hod_as_a_core_model}) gives an  $(\omega,\omega_1)$-iteration strategy for $\mathcal{Q}$: For instance, suppose $\mathcal{T}$ is a normal tree on $\mathcal{Q}^{-}$ with pseudo-normal-iterate $\mathcal{R}$. Let
\begin{displaymath}
  b_i = \cap \set{b}{ b \text{ is a branch through $\mathcal{T}$} \wedge b \text{ respects } (\cdot)^{3\#}_i}.
\end{displaymath}
Then $b_i \subseteq b_{i+1}$, $\gamma^{\mathcal{M}^{\mathcal{T}}_{\max b_i}}_{3\#,i} = \gamma^{\mathcal{R}}_{3\#,i}$, and we have an isomorphism $\sigma_i:H^{\mathcal{M}^{\mathcal{T}}_{\max b_i}}_{3\#,i} \isom H^{\mathcal{R}}_{3\#,i}$. 
Let $b = \cup _{i < \omega} b_i$. Then $\delta^{\mathcal{M}^{\mathcal{T}}_{\sup b}} \geq \sup_{n<\omega} \gamma^{\mathcal{R}}_{3\#,n} = \delta^{\mathcal{R}}$. So $b$ must be a cofinal branch. There is a canonical map
\begin{displaymath}
  \tau : \mathcal{R} \to  \mathcal{M}^{\mathcal{T}}_b 
\end{displaymath}
defined by $\tau (a) =  \pi_{\alpha,b} \circ \sigma_i^{-1} (a)$ for $\alpha < \max(b_i)$ and $a \in H^{\mathcal{R}}_{3\#,i}$. $\tau$ is onto $\mathcal{M}_b^{\mathcal{T}}$ because $\mathcal{M}^{\mathcal{T}}_{\max b_i} = \cup_{n<\omega} H_{3\#,n}^{\mathcal{M}^{\mathcal{T}}_{\max b_i}}$.  Therefore, $\tau$ is the identity, $\mathcal{M}_b^{\mathcal{T}} = \mathcal{R}$ and $b$ respects $(\cdot)^{3\#}_n$. 


In other words, by Theorem~\ref{thm:higher_sharp}, $M_2^{\#}(\mathcal{Q})$, regarded as a $\emptyset$-mouse, has a partial $(\omega,\omega_1)$-iteration strategy $\Gamma$ with respect to stacks of normal trees based on $\mathcal{Q}$ that moves the top $M_2^{\#}$-component correctly, i.e., whenever $\mathcal{U}$ is according to $\Gamma$ based on $\mathcal{Q}$ and the main branch of $\mathcal{U}$ does not drop, the last model of $\mathcal{U}$ must be $M_2^{\#}(\pi^{\mathcal{U}}(\mathcal{Q}))$. Also by definition of suitability, whenever $\mathcal{U}$ is according to $\Gamma$ based on $\mathcal{Q}^{-}$ but the main branch of $\mathcal{U}$ drops, the last model of $\mathcal{U}$ is $\Pi^1_3$-iterable. 
By the technique in \cite{MR2609944}, $M_2^{\#}(\mathcal{Q})$ is $(\omega,\omega_1)$-iterable. $M_2^{\#}(\mathcal{Q})$ has a sharp above three Woodins, so by taking its $\Sigma_1$-Skolem hull, we get an $(\omega,\omega_1)$-iterable $M_3^{\#}$. This finishes the proof of Theorem~\ref{thm:main} for $n=1$. 

\bibliographystyle{hplain}
\bibliography{m3}
\end{document}